\documentclass[11pt]{article}
\usepackage[utf8]{inputenc}
\usepackage[a4paper, margin=1in]{geometry}
\usepackage{amsmath, amssymb, amsthm, mathtools}
\usepackage{graphicx, caption, subcaption}
\usepackage{enumitem}
\usepackage{hyperref}
\usepackage{color}
\usepackage{float}
\usepackage{xcolor}
\usepackage{natbib}

\usepackage{comment}
\usepackage{times}

\usepackage{setspace}
\usepackage{changepage}

\renewenvironment{abstract}{%
	\begin{center}%
		\normalfont\fontsize{10pt}{12pt}\selectfont
		\begin{adjustwidth}{1.5pc}{1.5pc} % indent left & right
		}{%
		\end{adjustwidth}%
	\end{center}%
}

\title{\textbf{Stability and Bifurcation Analysis of Fractional Delay Differential Equation with a Delay-dependent Coefficient}}
\author{  Pragati Dutta, Sachin Bhalekar\footnote{corresponding author}\\
	\textit{School of Mathematics and Statistics, University of Hyderabad,India}\\
	email ids: 23mmpa01@uohyd.ac.in (PD)
	sachinbhalekar@uohyd.ac.in (SB)}
\date{}

\newtheorem{theorem}{Theorem}[section]

\theoremstyle{remark}

\newtheorem{definition}{Definition}[section]

\begin{document}
	\maketitle
	
	\begin{abstract}
	This paper investigates the stability of different regions in the $(k,\gamma)$-plane for a class of fractional delay differential equations given by
	\begin{equation}
		D^{\alpha} x(t)
		= -\gamma x(t)
		+ g\big(x(t - \tau_1)\big)
		- e^{-\gamma \tau_2}\, g\big(x(t - \tau_1 - \tau_2)\big),
		\qquad 0 < \alpha \le 1,
	\end{equation}
	where $k = g'(0)$. The primary focus is on the stability of the trivial equilibrium of the corresponding linearized system. A detailed stability and bifurcation analysis is carried out for the particular case $\tau_1 = 0$ and $\tau_2 \ge 0$. Furthermore, a general result is established for the case $\tau_1 > 0$, $\tau_2 \ge 0$, which holds for all values of $\alpha$ and $\tau_1$.\\
	In addition, illustrative examples are provided in the form of stability diagrams in the $(\tau_1,\tau_2)$-plane for fixed values of $\alpha$, $k$, and $\gamma$. These diagrams are generated using appropriate numerical methods to visualize the stability regions and to support the theoretical results.
		\end{abstract}
	
	\textit{Keywords:} Fractional Calculus; stability; Bifurcation; delay.

\section{Introduction}

In many real-world systems, the present state depends not only on current conditions but also on past behavior. Fractional calculus provides a natural framework to capture such memory effects through derivatives of non-integer order. Unlike classical integer-order derivatives, fractional derivatives are non-local operators and inherently account for system history \cite{podlubny1998fractional,kilbas2006theory}. This makes fractional differential equations (FDEs) particularly suitable for modeling phenomena in viscoelasticity, control systems, signal processing, and biological processes \cite{diethelm2010analysis,oldham1974j,mainardi2022fractional}. Moreover, the flexibility in the order of differentiation allows better agreement with experimental observations \cite{bagley1983theoretical,Podlubny1999}. The qualitative behavior and stability of such systems have been widely studied using Lyapunov-type and related analytical methods \cite{agarwal2016stability}.

Another important feature in modeling real systems is the presence of time delays, where the rate of change depends on past states. This leads to delay differential equations (DDEs), which arise naturally in population dynamics, physiology, engineering, and control theory \cite{hale2013introduction,erneux2009applied,kuang1993delay}. Although similar in form to ordinary differential equations, DDEs are infinite-dimensional and can exhibit oscillations, bifurcations, and chaos even in simple settings \cite{hale2013introduction,sprott2007simple,ruiz2013chaos,insperger2011semi}. They have been widely applied across scientific and engineering disciplines \cite{michiels2010control,gumussoy2014computer,agarwal2016stability}. In this context, discrete and difference equation techniques also play an important role in the analysis of delay systems \cite{agarwal2000difference}.

The combination of fractional dynamics with time delays leads to fractional delay differential equations (FDDEs), which provide a more realistic framework for systems exhibiting both memory and delayed responses. Various numerical and analytical methods have been developed for such systems \cite{bhalekar2011predictor,daftardar2014new}, and their stability and bifurcation properties have been extensively studied \cite{bhalekar2016stability,bhalekar2011fractional,gupta2024fractional,bhalekar2022stability}. In particular, practical stability concepts and Razumikhin-type methods have been successfully applied to fractional delay systems \cite{agarwal2021practical,agarwal2022generalized}. Moreover, FDDEs are known to exhibit complex dynamics, including chaotic behavior \cite{bhalekar2012dynamical,daftardar2012dynamics}.

In many applications, systems involve more than one delay, each representing a different mechanism acting over its own time scale \cite{gu2003stability,niculescu2002delay}. Such multi-delay models appear in population dynamics and biological systems, for example in describing maturation and feedback processes \cite{braddock1983two,piotrowska2008hopf,boullu2020stability}. However, the presence of multiple delays significantly increases analytical complexity. In addition, delay-dependent coefficients further enhance modeling flexibility by allowing the influence of past states to vary explicitly with the delay. Models with such features have been analyzed using geometric and characteristic equation approaches \cite{an2019geometric}, and delay-dependent stability criteria have been developed for systems with fixed, distributed, and nonlinear delays \cite{huang2004analysis,fei2017delay}.

In our earlier work \cite{dutta2025some}, we investigated the stability of a nonlinear fractional delay differential equation with two discrete delays and obtained several delay-independent results in the $(k,\gamma)$-plane. However, a complete characterization of the remaining parameter regions and the combined effect of both delays was not fully addressed. Motivated by this, the present paper completes the analysis for the case $\tau_1 = 0$ and derives new results for the general two-delay system. We also study the interaction between the delays through stability diagrams in the $(\tau_1,\tau_2)$-plane, providing further insight into the system dynamics.

The rest of the paper is organized as follows. Section~\ref{sec:prelim} presents the necessary preliminaries. Section~\ref{sec:model} introduces the model and derives the characteristic equation. Section~\ref{sec:C1} analyzes the case $\tau_1 = 0$, while Section~\ref{sec:C2} provides a sufficient condition for instability in the general case. Section~\ref{sec:C3} presents illustrative examples with stability diagrams in the $(\tau_1,\tau_2)$-plane. Finally, Section~\ref{sec:conc} concludes the paper.

	\section{Preliminaries}
\label{sec:prelim}
In this section, we present some definitions \cite{podlubny1998fractional,kilbas2006theory,lakshmanan2011dynamics,smith2010introduction} and a result \cite{bhalekar2016stability} available in the literature.

\begin{definition}[Riemann--Liouville Fractional Integral]
	Let $f \in L^{1}(0,b)$ and $\mu > 0$. The Riemann--Liouville fractional integral of order $\mu$ is defined by
	\begin{equation*}
		I^{\mu} f(t)
		= \frac{1}{\Gamma(\mu)} \int_{0}^{t} (t-\tau)^{\mu-1} f(\tau)\, d\tau,
		\qquad 0 < t < b. 
	\end{equation*}
\end{definition}
\begin{definition}[Caputo Fractional Derivative]
	Let $f \in L^{1}(0,b)$, $\mu > 0$, and $m \in \mathbb{N}$ such that $m-1 < \mu \le m$. The Caputo fractional derivative of order $\mu$ is defined by
	\[
	D^{\mu} f(t) =
	\begin{cases}
		\dfrac{d^{m}}{dt^{m}} f(t), & \text{if } \mu = m, \\[8pt]
		I^{\,m-\mu} \left( \dfrac{d^{m}}{dt^{m}} f(t) \right), & \text{if } m-1 < \mu < m,
	\end{cases}
	\]
	where $I^{\mu}$ is the Riemann--Liouville fractional integral.
	
	Moreover, for $m-1 < \mu \le m$, we have
	\[
	I^{\mu} \, D^{\mu} f(t)
	= f(t) - \sum_{k=0}^{m-1} \frac{f^{(k)}(0)}{k!} t^{k}.
	\]
\end{definition}

\begin{definition}[Equilibrium Point]
	Consider the fractional delay differential equation
	\begin{equation*}
		D^{\alpha} x(t) = f\big(x(t), x(t-\tau)\big), \qquad 0 < \alpha \le 1,   \tag{*}
		\label{star}
	\end{equation*}
	where $\tau > 0$ and $f \in C^{1}(E)$ with $E \subset \mathbb{R}^{2}$ open. 
	
	A constant solution $x(t) = x^{*}$ pf (\ref{star}) is called an equilibrium point, it satisfies
	\[
	f(x^{*}, x^{*}) = 0.
	\]
\end{definition}

\begin{definition}[Initial Value Problem]
	Consider the delay differential equation together with the initial function
	\begin{equation*}
		x(t) = \phi(t), \qquad -\tau \le t \le 0,
	\end{equation*}
	where $\phi : [-\tau,0] \to \mathbb{R}$ is a continuous function.
	
	The corresponding solution is denoted by $x(t,\phi)$, and the norm of $\phi$ is defined as
	\[
	\|\phi\| = \sup_{-\tau \le t \le 0} |\phi(t)|.
	\]
\end{definition}

\begin{definition}[Stability]
	An equilibrium point $x^{*}$ is said to be stable if, for every $\varepsilon > 0$, there exists $\delta > 0$ such that
	\[
	\|\phi - x^{*}\| < \delta \;\Rightarrow\; |x(t,\phi) - x^{*}| < \varepsilon, \quad \text{for all } t \ge 0.
	\]
\end{definition}

\begin{definition}[Asymptotic Stability]
	An equilibrium point $x^{*}$ is said to be asymptotically stable if it is stable and there exists $b_{0} > 0$ such that
	\[
	\|\phi - x^{*}\| < b_{0} \;\Rightarrow\; \lim_{t \to \infty} x(t,\phi) = x^{*}.
	\]
\end{definition}

\begin{definition}[Instability]
	An equilibrium point that is not stable is called unstable.
\end{definition}
\begin{theorem}
	\label{thm:prelim}
	Consider the scalar FDE with a single discrete delay:
	\begin{equation}
		\label{eq1}
		D^\alpha x(t) = a x(t) + b x(t - \tau), \quad 0 < \alpha \le 1, \tau\ge0.
	\end{equation}
	Then, the zero equilibrium $x_*=0$ has the following stability behavior:
	\begin{enumerate}
		\item If $b \in (-\infty, -|a|)$, then the equilibrium is asymptotically stable for $\tau \in [0, \tau_{cr})$ and the system undergoes Hopf bifurcation at
		\[
		\tau_{*}' = \frac{\arccos\left( \frac{(a\cos(\frac{\alpha \pi}{2}) + \sqrt{b^2 - a^2 \sin^2(\frac{\alpha \pi}{2})})\cos(\frac{\alpha \pi}{2}) - a}{b} \right)}{\left( a \cos\left(\frac{\alpha \pi}{2}\right) + \sqrt{b^2 - a^2 \sin^2\left(\frac{\alpha \pi}{2}\right)} \right)^{1/\alpha}}.
		\label{taustar}
		\]
		\item If $b \in (-a, \infty)$, then the equilibrium is unstable for all $\tau \ge 0$.
		\item If $a < 0$ and $b \in (a, -a)$, then the equilibrium is asymptotically stable for all $\tau \ge 0$.
	\end{enumerate}
	\begin{figure}[H]
		\centering
		\includegraphics[scale=0.57]{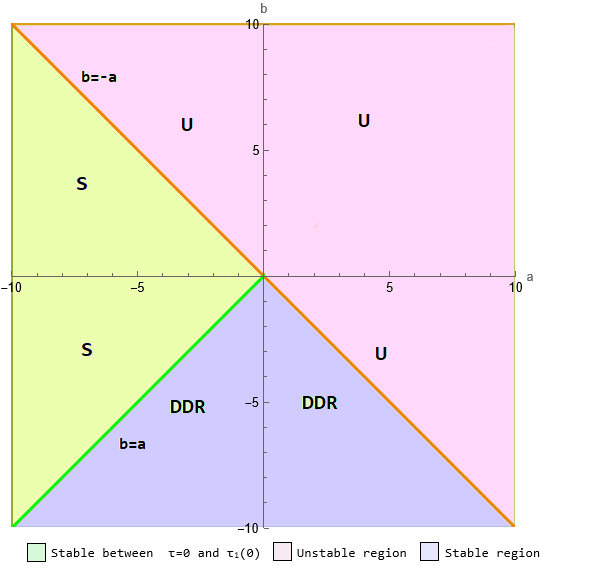}
		\caption{Stability regions of Equation (\ref{eq1}).}
		\label{fig:prel}
	\end{figure}
	\textnormal {Notation: Here, DDR means Delay Dependent Region.}
\end{theorem}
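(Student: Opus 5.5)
The plan is to reduce the statement to a study of the characteristic equation of (\ref{eq1}). Taking Laplace transforms with the Caputo derivative, or substituting $x=e^{\lambda t}$ formally, gives
\[
\lambda^\alpha - a - b e^{-\lambda\tau}=0,
\]
with $\lambda^\alpha$ taken on the principal branch. I will use the standard fact from the fractional delay literature: the zero solution is asymptotically stable exactly when every root has negative real part. It is unstable when some root has positive real part. This holds for all $0<\alpha\le1$ because the solution's Laplace transform has no branch-point singularity in the open right half-plane. The argument then becomes a root-location problem, and it has three steps.

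The first step handles $\tau=0$ and the behaviour near the real axis. At $\tau=0$ the equation reads $\lambda^\alpha=a+b$. If $a+b<0$, the only candidate for $\lambda$ would satisfy $|\arg\lambda|=\pi/\alpha>\pi$, which is impossible on the principal branch, so there is no root and the zero solution is stable. If $a+b>0$, there is a positive real root $\lambda=(a+b)^{1/\alpha}$. For part 2 ($b>-a$) and any $\tau\ge0$, I restrict the function $h(\lambda)=\lambda^\alpha-a-be^{-\lambda\tau}$ to real $\lambda\ge0$. It satisfies $h(0)=-(a+b)<0$ and $h(\lambda)\to\infty$ as $\lambda\to\infty$. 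Hence there is a positive real root for every $\tau\ge0$, which gives instability.

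The second step locates the crossings of the imaginary axis. I put $\lambda=i\omega$ with $\omega>0$ (conjugate roots are symmetric) and use $(i\omega)^\alpha=\omega^\alpha(\cos\frac{\alpha\pi}{2}+i\sin\frac{\alpha\pi}{2})$. Separating real and imaginary parts gives
\[
\omega^\alpha\cos\tfrac{\alpha\pi}{2}-a=b\cos\omega\tau,\qquad \omega^\alpha\sin\tfrac{\alpha\pi}{2}=-b\sin\omega\tau .
\]
Squaring and adding produces a quadratic in $u=\omega^\alpha$:
\[
u^2-2au\cos\tfrac{\alpha\pi}{2}+a^2-b^2=0.
\]
Its roots are $u=a\cos\frac{\alpha\pi}{2}\pm\sqrt{b^2-a^2\sin^2\frac{\alpha\pi}{2}}$.
\begin{itemize}
\item Part 3 ($a<0$, $|b|<-a$): here $a^2-b^2>0$ and the linear coefficient $-2a\cos\frac{\alpha\pi}{2}>0$, so the quadratic has no positive root. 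Roots therefore never cross the imaginary axis. Because roots depend continuously on $\tau$, and no root enters from infinity in the right half-plane, stability at $\tau=0$ (since $a+b<0$) persists for all $\tau$. I will also rule out $\lambda=0$ as a root, which follows from $a+b\ne0$.
\item Part 1 ($b<-|a|$): here $a^2-b^2<0$, so there is exactly one positive root $u_+$, namely the plus sign. With $\omega=u_+^{1/\alpha}$, the real-part equation gives $\cos\omega\tau=(u_+\cos\frac{\alpha\pi}{2}-a)/b$. The sign of $\sin\omega\tau$ is that of $-b>0$, so $\omega\tau\in(0,\pi)$ and the arccos branch is correct. The smallest crossing delay is therefore the stated $\tau_*'$, and $\tau_{cr}=\tau_*'$. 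On $[0,\tau_*')$ there is no crossing, and stability at $\tau=0$ (since $a+b<0$) carries over.
\end{itemize}

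The third step checks transversality for the Hopf claim, and I expect this to be the main obstacle. I differentiate the characteristic equation implicitly in $\tau$ and evaluate $\operatorname{sign}\operatorname{Re}(d\lambda/d\tau)$ at $\lambda=i\omega$. The standard trick is to compute $(d\lambda/d\tau)^{-1}$ and use $be^{-\lambda\tau}=\lambda^\alpha-a$. This reduces the sign to that of
\[
\operatorname{Re}\left[\frac{\alpha\lambda^{\alpha-1}}{\lambda(\lambda^\alpha-a)}\right],
\]
which, up to a positive factor, has the sign of $\omega^{\alpha}-a\cos\frac{\alpha\pi}{2}=\sqrt{b^2-a^2\sin^2\frac{\alpha\pi}{2}}>0$. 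The crossing is therefore from left to right, which gives instability just beyond $\tau_*'$ and the Hopf bifurcation there.

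The delicate points are the fractional branch bookkeeping in this derivative, and justifying that no roots enter from infinity. For the latter I would use the bound $|\lambda^\alpha|\le|a|+|b|$ on roots with $\operatorname{Re}\lambda\ge0$.
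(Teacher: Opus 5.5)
The paper does not prove this theorem. It imports it as a preliminary from \cite{bhalekar2016stability}, so there is no in-paper argument to match. Your proof is correct and self-contained, and it uses the same tools the paper relies on elsewhere. The real-axis intermediate-value argument for part~2 is exactly the one used for Theorem~\ref{thmcase2_unified}: the sign of $\Delta(0)$ against $\Delta(\lambda)\to+\infty$. The $\lambda=i\omega$ split followed by squaring and adding is the computation that produces the quoted formula for $\tau_*'$, and it is also the basis of Section~\ref{sec:C3}. Beyond the citation, your version adds two things. First, it computes the crossing direction explicitly. Second, it gives the a~priori bound $|\lambda|^\alpha\le|a|+|b|$ on right-half-plane roots, which is what legitimately carries stability from $\tau=0$ to small $\tau>0$.

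A few repairs are needed.

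\textbf{Transversality sign.} In the transversality step, two sign errors cancel. Differentiating the characteristic equation gives
\[
\Big(\frac{d\lambda}{d\tau}\Big)^{-1}=-\frac{\alpha\lambda^{\alpha-1}}{\lambda(\lambda^\alpha-a)}-\frac{\tau}{\lambda}.
\]
At $\lambda=i\omega$, using $|\lambda^\alpha-a|=|b|$, one has $\operatorname{Re}\big[\alpha\lambda^{\alpha-1}/(\lambda(\lambda^\alpha-a))\big]=-\alpha\omega^{\alpha-2}(\omega^\alpha-a\cos\frac{\alpha\pi}{2})/b^2<0$. Hence
\[
\operatorname{Re}\Big(\frac{d\lambda}{d\tau}\Big)^{-1}\Big|_{\lambda=i\omega}=\frac{\alpha\,\omega^{\alpha-2}}{b^2}\sqrt{b^2-a^2\sin^2\tfrac{\alpha\pi}{2}}>0 .
\]
For $\alpha=1$ this reduces to the classical value $1/(\omega^2+a^2)$. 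Your conclusion stands, but write the computation this way.

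\textbf{The case $\alpha=1$.} At $\tau=0$ the equation does have the root $\lambda=a+b<0$, so ``no root'' should read ``no root in the closed right half-plane''. In part~3 the linear coefficient $-2a\cos\frac{\alpha\pi}{2}$ is $0$ rather than positive. In both places the conclusion is unchanged, because the constant term $a^2-b^2$ is positive.

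\textbf{Instability beyond $\tau_*'$.} The crossing direction does not depend on $\tau$, and $\omega$ is the only crossing frequency. So every later crossing $\tau_*'+2n\pi/\omega$ is also left-to-right, and no root ever returns to the left half-plane. Add this sentence, because it gives instability for \emph{all} $\tau>\tau_*'$. That is the ``single stable region'' reading Section~\ref{sec:C1} actually uses when it compares $\tau$ with $\tau_*'(\tau)$. The statement as written, with $\tau_{cr}$ left undefined, does not spell this out.
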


	\section{Model Formulation and Linearization}
	\label{sec:model}
	We consider the nonlinear fractional delay differential equation of the form
	\begin{equation}
		D^{\alpha} x(t)
		= -\gamma x(t)
		+ g\big(x(t - \tau_1)\big)
		- e^{-\gamma \tau_2}\, g\big(x(t - \tau_1 - \tau_2)\big),
		\qquad 0 < \alpha \le 1,
		\label{eq:main}
	\end{equation}
	where \( D^{\alpha} \) denotes the Caputo fractional derivative, \( \gamma \in \mathbb{R} \), and \( g \in C^{1}(\mathbb{R}) \).
	The parameters \( \tau_1, \tau_2 \ge 0 \) represent discrete delays, while the exponential factor \( e^{-\gamma \tau_2} \) introduces a delay-dependent coefficient.
	The integer-order counterpart of \eqref{eq:main}, analyzed in \cite{boullu2020stability}, arises in models describing platelet production dynamics.

	Let \( x_* \) denote an equilibrium point of \eqref{eq:main}. It satisfies
	\[
	-\gamma x_* + g(x_*) - e^{-\gamma \tau_2}\, g(x_*) = 0.
	\]
	Here, we assume $g$ to be a nonlinear function such that \( g(0) = 0 \), it immediately follows that \( x_* = 0 \) is an equilibrium of \eqref{eq:main}.
	
	Assuming further that \( g'(0) = k \), we linearize the system in a neighborhood of the equilibrium \( x_* = 0 \).
	
	The linearized form of the equation is given by
	\begin{equation}
		D^{\alpha} x(t)
		= -\gamma x(t)
		+ k\,x(t - \tau_1)
		- k e^{-\gamma \tau_2}\, x(t - \tau_1 - \tau_2).
		\label{eq:linear}
	\end{equation}
	The local stability of the nonlinear equation (\ref{eq:main}) in a neighborhood of the equilibrium $x_*=0$ coincides with that of its linearized equation.
	The corresponding characteristic equation associated with Eq.~\eqref{eq:linear} is
	\begin{equation}
		\lambda^{\alpha}
		= -\gamma
		+ k e^{-\lambda \tau_1}
		- k e^{-\gamma \tau_2} e^{-\lambda(\tau_1 + \tau_2)}.
		\label{eq:char}
	\end{equation}
	
	Some stability results for Eq.(\ref{eq:linear}) are presented in \cite{dutta2025some}. Most of those results are independent of delay. In this work, we continue that analysis and find few more stability results. We focus mainly on the delay-dependent stability. However, few results (Theorem 4.1) are independent of delay too. So, we discuss the delay-dependent regions for the following two cases: 
	\begin{itemize}
		\item \textbf{Case 1:} \( \tau_1 = 0 \) with arbitrary \( \tau_2 > 0 \),
		\item \textbf{Case 2:} \( \tau_1 > 0 \) with arbitrary \( \tau_2 > 0 \).
	\end{itemize}
	We also discuss stability diagrams in the \( \tau_1 \tau_2 \)-plane for selected parameter values.
	
	\section{Case 1: \texorpdfstring{$\tau_1 = 0$}{tau1 = 0}}
	\label{sec:C1}
	%In this case, the delay appears only in the exponentially modulated feedback term.% 
	Substituting $\tau_1 = 0$, $\tau_2 = \tau$ in the linearized equation (\ref{eq:linear}), we get
	\begin{equation}
		D^\alpha x(t) = (k - \gamma) x(t) - k e^{-\gamma \tau} x(t - \tau), \quad 0 < \alpha \le 1,
		\label{eq:case1}
	\end{equation}
	which can be written in the standard linear form
	\begin{equation}
		D^\alpha x(t) = a x(t) + b(\tau) x(t - \tau),
		\label{c1:lin}
	\end{equation}
	where
	\[
	a = k - \gamma, \qquad b(\tau) = -k e^{-\gamma \tau}.
	\]
	
	In \cite{dutta2025some}, delay-independent stability has been established for the second quadrant ($k<0,\;\gamma>0$) and for the region $\gamma > 2k>0$ in the first quadrant, while instability has been shown for the third quadrant (see Fig. (\ref{ind1})). We now apply Theorem~\ref{thm:prelim} to analyze the stability of the equilibrium point $x_* = 0$ in the remaining regions, namely, the region $0 < \gamma < 2k$ and the fourth quadrant in the $k-\gamma$ plane.
	\begin{figure}
		\centering
		\includegraphics[scale=0.5]{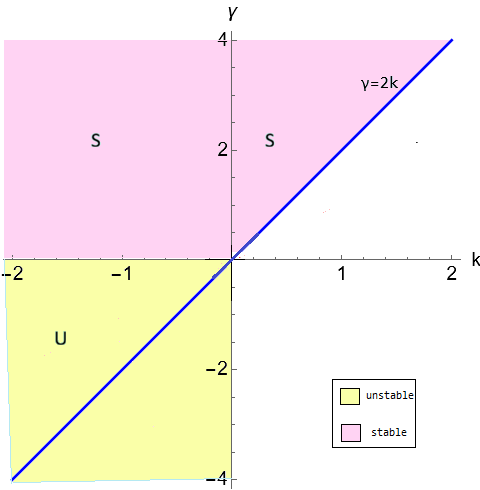}
		\caption{Delay Independent regions for $\tau_1=0$ and $\tau_2>0$}
		\label{ind1}
	\end{figure}
	
	Consider the first quadrant of $k-\gamma$ plane. The remaining region $0<\gamma<2k$  can be divided into two subregions: $0<\gamma<k$ and $k<\gamma<2k.$ Now, we analyse these subregions separately.\\
	
	Note that 
	\[
	\frac{db(\tau)}{d\tau} = k \gamma e^{-\gamma \tau} > 0,
	\]
	since $k > 0 \;\; \text{and} \;\; \gamma>0$ . Hence, the function $b(\tau)$ increases monotonically with respect to $\tau$ when $(k,\gamma)$ lies in the region $0<\gamma<2k$.

		\subsection{Case I: $0<\gamma<k$}
	In this case, we have $a=k-\gamma>0$. At $\tau=0$, we have
	$b(0)=-k<-a,$ which means that the initial point $(a,b(0))$ lies strictly in the SSR region of Theorem \ref{thm:prelim} (Fig. (\ref{fig:prel})). 
	As $\tau$ increases, the vertical array $T_1=\{(a,b(\tau)):\tau\ge0\}$ rises monotonically in $a-b$ plane (see Fig. (\ref{idea}), right half). Since $a>0$, the line $b=-a$ lies below the origin, and therefore the vertical line $T_1$ will eventually intersect $b=-a$ at some finite delay $\tau=\tau_*^{''}$ (as shown in Fig.(\ref{idea}), right half). 
	At this delay, the system moves from the SSR region into the unstable region of Theorem~\ref{thm:prelim}. Solving $b(\tau_{*}'')=-a$ gives
	\begin{equation}
		\label{eq:tau2}
		e^{-\gamma \tau_*^{''}} = \frac{k-\gamma}{k}
		\implies
		\tau_{*}{''} = \frac{-1}{\gamma} \log\left( \frac{k-\gamma}{k} \right).
	\end{equation}
	Note that  $0<\gamma<k$, implies $0<\frac{k-\gamma}{k}<1$ and hence $\tau_*^{''}>0$.
	\begin{figure}[H]
		\centering
		\includegraphics[scale=0.55]{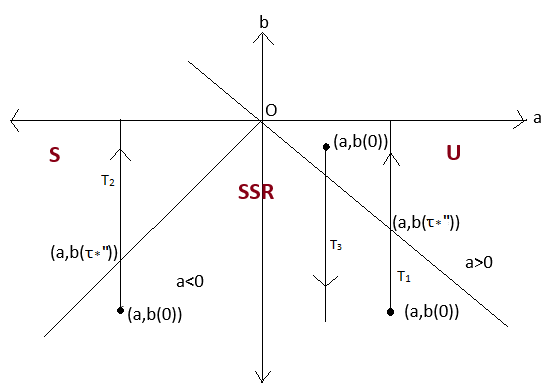} 
		\caption{Analysis for the region $0<\gamma<2k$}
		\label{idea}
	\end{figure}
	
	Independently, stability inside the SSR region is limited by the Hopf critical value $\tau_{*}'$ provided by Theorem  \ref{thm:prelim}. Note that, the parameter $b$ depends on $\tau.$ Hence, the expression (\ref{taustar}) for the critical delay also depends on $\tau$ viz. $\tau_*'(\tau)$. Because the expression for $\tau_{*}'(\tau)$ involves a square root, two distinct values may arise, corresponding to two branches of Hopf bifurcation. This gives rise to a complex  bifurcation scenario discussed below.

	\subsection{Bifurcation curves}
	
	The critical values corresponding to the Single Stable Region(SSR) are obtained from the intersection points of the curve $\tau_* = \tau_*{'}(\tau)$ and the line $\tau_* = \tau$ in the $\tau$–$\tau _*$ plane. 
	For fixed values of $k>0$ and $\gamma>0$, the curve $\tau_*'(\tau)$ takes complex values beyond a certain finite value of $\tau$. Therefore, the graph of $\tau_*{'}(\tau)$ will be sketched upto certain finite values only. Now, we will discuss various possibilities of intersection between the curves $\tau, \tau_*^{''} \;\text{and}\; \tau_*{'}(\tau)$ in the first quadrant of $k-\gamma$ plane. Further, we will provide bifurcation curves in $k-\gamma$ plane based on our observations.
	
	Based on the relative positions of $\tau_*^{'}$ and $\tau_*^{''}$, the following cases can occur:\\
	
	\textbf{Case 1:}  
	The first intersection point (say $\tau_{*a}^{'}$) occurs before $\tau_*^{''}$, and there is no second intersection. In this case, the equilibrium lies in the SSR region with the bifurcation value $\tau_* = \tau_{*a}'$ (see Fig.~\ref{case1}) i.e $0<\tau<\tau_{*a}' $ implies $x_*$ is asymptotically stable and $ \tau>\tau_{*a}{'}$ implies $x_*$ is unstable.
	\begin{figure}[H]
		\centering
		\includegraphics[scale=0.5]{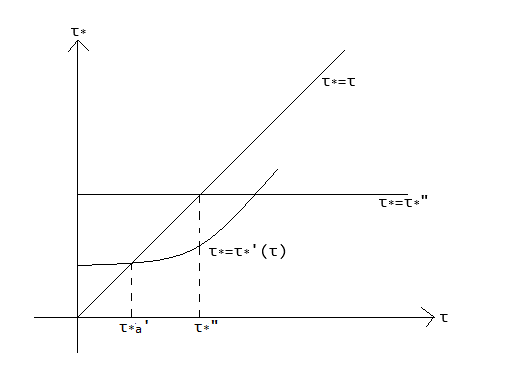}
		\caption{Single intersection leading to SSR behavior (Not upto scale).}
		\label{case1}
	\end{figure}
	\vspace{0.3cm}
	
	\textbf{Case 2:}  
	There is no intersection between the curve $\tau_* = \tau_*^{'}(\tau)$ and the line $\tau_* = \tau$ (see Fig.~\ref{case2}). Here also, the equilibrium lies in the SSR region with the critical value $\tau_* = \tau_*^{''}$. For $0<\tau<\tau_*^{''}$, we can observe that $\tau<\tau_*'(\tau)$, hence, $x_*$ is asymptotically stable. For $\tau>\tau_*^{''}$, the point $(a,b(\tau))$ enters in the unstable region and $x_*$ becomes unstable.
		\begin{figure}[H]
		\centering
		\includegraphics[scale=0.5]{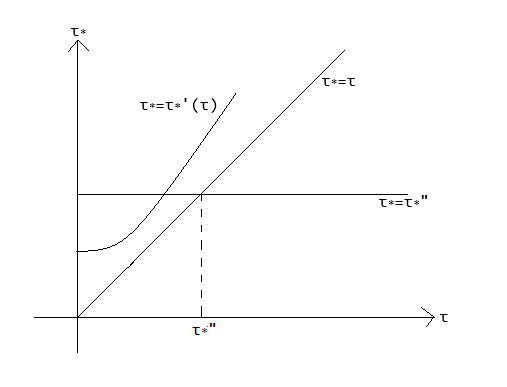}
		\caption{No intersection between the curve $\tau_* = \tau_*^{'}(\tau)$ and the line $\tau_* = \tau$ leading to SSR behavior  (Not upto scale).}
		\label{case2}
	\end{figure}
\vspace{0.3cm}	
	
	\textbf{Case 3:}  
	In this case, the first intersection $\tau_{*a}{'}$ between $\tau$ and $\tau_*{'}(\tau)$  occurs before $\tau_*^{''}$, and the second intersection $\tau_{*b}'$ occurs after $\tau_*^{''}$ (see Fig.~\ref{case3}). The second intersection does not affect stability because it lies beyond $\tau_* ''$. Hence, the equilibrium lies in the SSR region with the critical value $\tau_* = \tau_{*a}{'}$.
		\begin{figure}[H]
		\centering
		\includegraphics[scale=0.5]{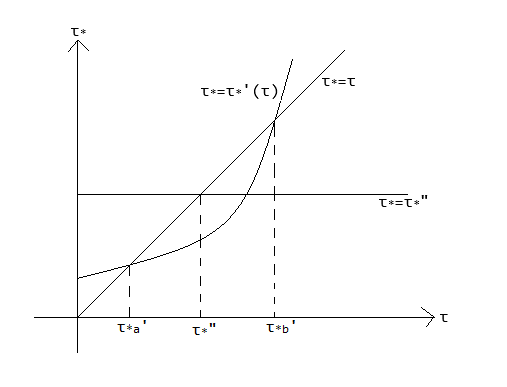}
		\caption{Two intersections with only the first one affecting stability (Not upto scale).}
		\label{case3}
	\end{figure}
	
	\vspace{0.3cm}
	
	\textbf{Case 4:}  
	Both intersection points (say $\tau_{*a}{'}$ and $\tau_{*b}{'}$) occur before $\tau_*^{''}$ (see Fig.~\ref{case4}). This case gives rise to a switch of the form Stable–Unstable–Stable–Unstable (SUSU):
	\begin{itemize}
		\item stable for $0<\tau<\tau_{*a}{'}$
		\item unstable for $\tau_{*a}{'}<\tau<\tau_{*b}{'}$
		\item stable for $\tau_{*b}{'}<\tau<\tau_*{''}$
		\item unstable for $\tau>\tau_*{''}$
	\end{itemize}
	
	\begin{figure}[H]
		\centering
		\includegraphics[scale=0.5]{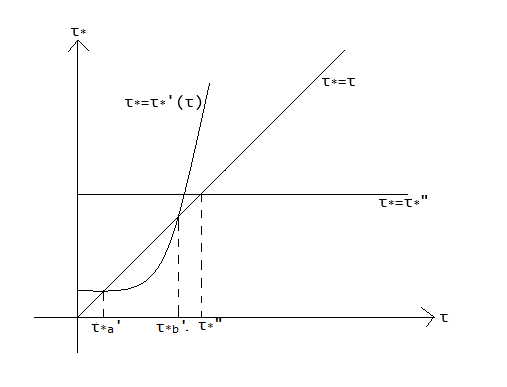}
		\caption{Both intersections before $\tau_*^{''}$ leading to SUSU behavior  (Not upto scale).}
		\label{case4}
	\end{figure}
	
	\vspace{0.3cm}
	
	\textbf{Case 5:}  
	Both intersection points (say $\tau_{*a}{'}$ and $\tau_{*b}{'}$) occur after $\tau_*^{''}$ (see Fig.~\ref{case5}). In this situation, neither intersection affects stability. Hence, the equilibrium remains in the SSR region with the critical value $\tau = \tau_*{''}$. Euilibrium is stable for $0<\tau<\tau_*^{''}$ and unstable for $\tau>\tau_*{''}$.
	
	\begin{figure}[H]
		\centering
		\includegraphics[scale=0.5]{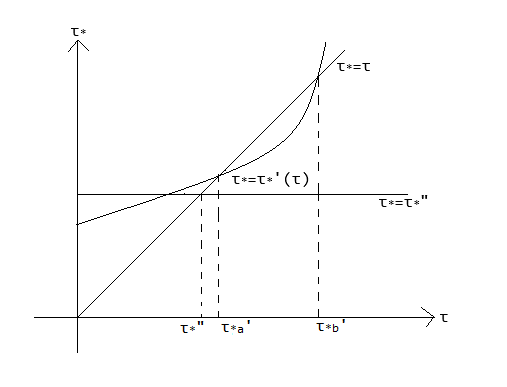}
		\caption{Both intersections after $\tau_*^{''}$—stability unaffected (SSR region) (Not upto scale).}
		\label{case5}
	\end{figure}
		Based on these different scenarios, we have two bifurcation curves in the first quadrant of $k-\gamma$ plane, say $\gamma=h_1(k)$ and $\gamma=h_2(k)$.
	
	\begin{itemize}
		\item The curve $\gamma=h_1(k)$ bifurcates the two regions: 
		\begin{itemize}
			\item when there are two intersection between the curves $\tau_*=\tau_*'(\tau)$ and $\tau_*=\tau$ and
			\item There is no intersection between these curves.
			
		\end{itemize}
		So, it will be plotted by using the condition that the line $\tau _ *=\tau$ is a tangent to the curve $\tau_*=\tau_*{'}(\tau)$. This tangency condition will be given by the points $(k,\gamma)$ for which  
		\[ \tau_*{'}(\tau)-\tau=0 \;\; \text{and} \;\; \frac{\partial\tau_*{'}(\tau)}{\partial\tau}-1=0.\]
		For a fixed value of $\alpha,$ we use ``Table" and ``FindRoot" commands in Wolfram Mathematica to plot this curve in parameter plane (ref. Fig. (\ref{kgamma})).
		\begin{figure}[H]
			\centering
			\includegraphics[scale=0.44]{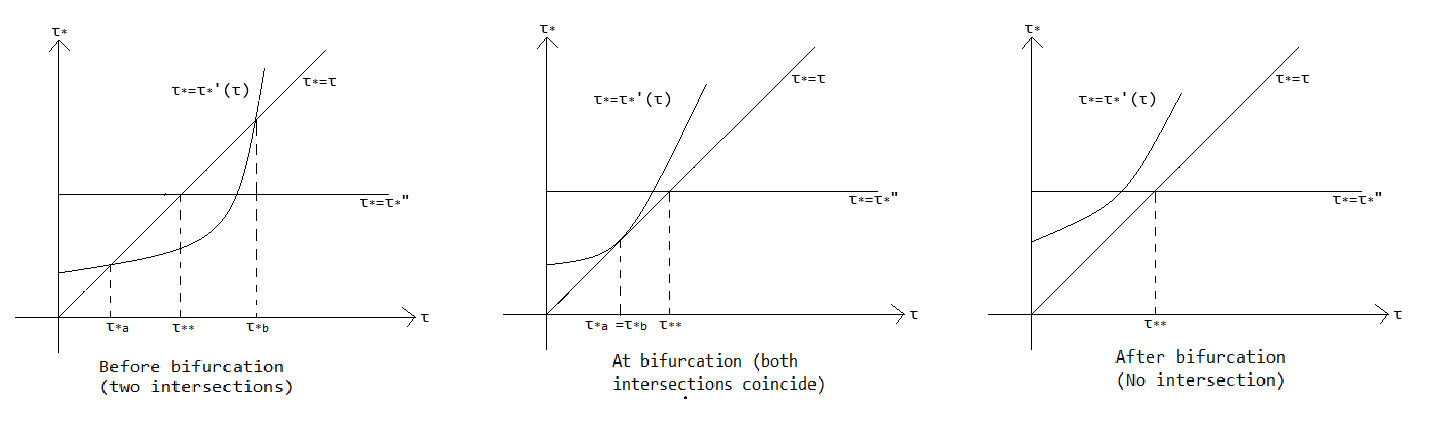}
			\caption{Geometric interpretation of curve $\gamma=h_1(k)$}
			\label{h1}
		\end{figure}

		\item The curve $\gamma=h_2(k)$ bifurcates the two regions: 
		\begin{itemize}
			\item $\tau_*{'}(\tau_*{''})>\tau_*{''}$ and
			\item $\tau_*{'}(\tau_*{''})<\tau_*{''}$ (see Fig. (\ref{h1})).
		\end{itemize}
		Hence, it will be plotted by using the condition 
		\[\tau_*{'}(\tau_*{''})=\tau_*{''}\]
		
		Again, for a fixed value of $\alpha,$ we use ``Table" and ``FindRoot" command in Mathematica to plot this curve (ref. Fig. (\ref{kgamma})) in $k-\gamma$ plane.
		\begin{figure}[H]
			\centering
			\includegraphics[scale=0.4]{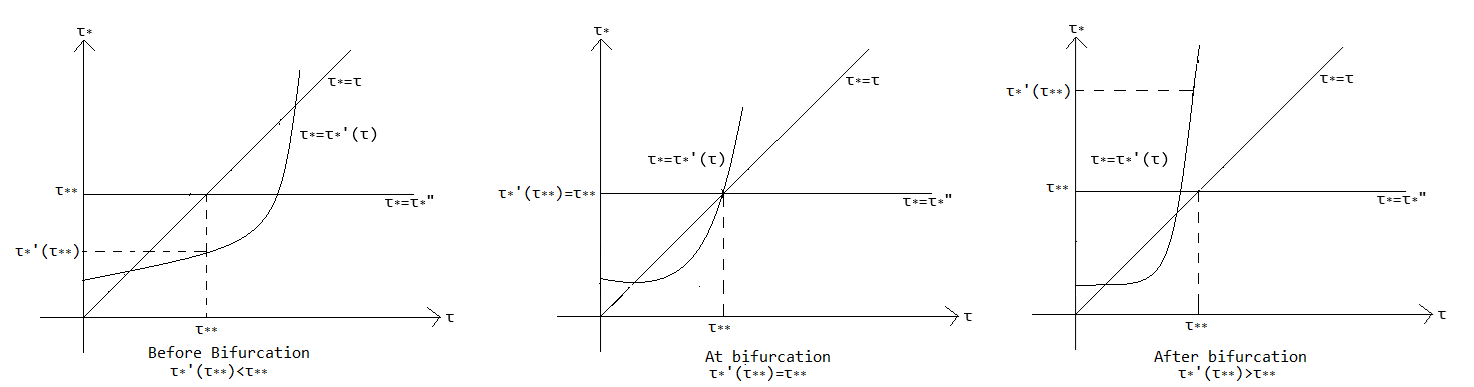}
			\caption{Geometric interpretation of curve $\gamma=h_2(k)$}
			\label{h1_a}
		\end{figure}
	\end{itemize}

	\subsection{Case II: $k<\gamma<2k$ }
	Here, $a=k-\gamma<0$. At $\tau=0$, $b(0)=-k < a$.
	
	So, once again the initial point $(a,b(0))$ lies in the SSR region of Theorem~\ref{thm:prelim}. Thus, the system starts with local stability for sufficiently small delay. 
	
	Here also, as $\tau$ increases, $b(\tau)$ goes toward $0$. Again, the arrow $T_2=\{(a,b(\tau))|\tau\ge0\}$ will intersect the line $b=a$ at some finite delay $\tau=\tau_{*}{''}$ (ref. Fig. (\ref{idea}) left half). At this critical value, the equilibrium crosses from the SSR region to the stable region as shown in Theorem \ref{thm:prelim}. The crossing delay $\tau_{*}{''}$ is obtained from \[ b(\tau_{*}{''})=a \]
	\begin{equation}
		\label{eq:tau2_case2}
		\text{i.e} \;\;\;-k e^{-\gamma \tau_{*}{''}} = k-\gamma
		\implies
		\tau_{*}{''} = \frac{-1}{\gamma} \log\left( \frac{\gamma-k}{k} \right)>0.
	\end{equation}
	Here $\tau_{*}''$ marks entry into the stable region rather than the unstable region. 
	
	Again, because of the SSR region, there is a critical value $\tau_{*}'(\tau)$. Here also, we plot the same bifurcation curves $h_1(k)$ and $h_2(k)$. However, in this region, the curve $h_2(k)$ does not appear; only $h_1(k)$ exists. 
	On the curve $h_2(k)$, we have $b = a$. 
	If we put this condition in the expression of $\tau_{*}'$ given in Theorem~ ~(\ref{thm:prelim}), it becomes undefined. 
	The curve $h_1(k)$ divides the plane into two parts: the S–U–S region (two intersections) and the stable region (no intersection).
	Thus, the bifurcation curves $\gamma=h_1(k)$ and $\gamma=h_2(k)$ together with the lines $\gamma=k$ and $\gamma=2k$ partition the entire region $0<\gamma<2k$ into six distinct regions (see Fig. (\ref{kgamma})). 
	
	\begin{figure}[H]
		\centering
		\includegraphics[scale=0.6]{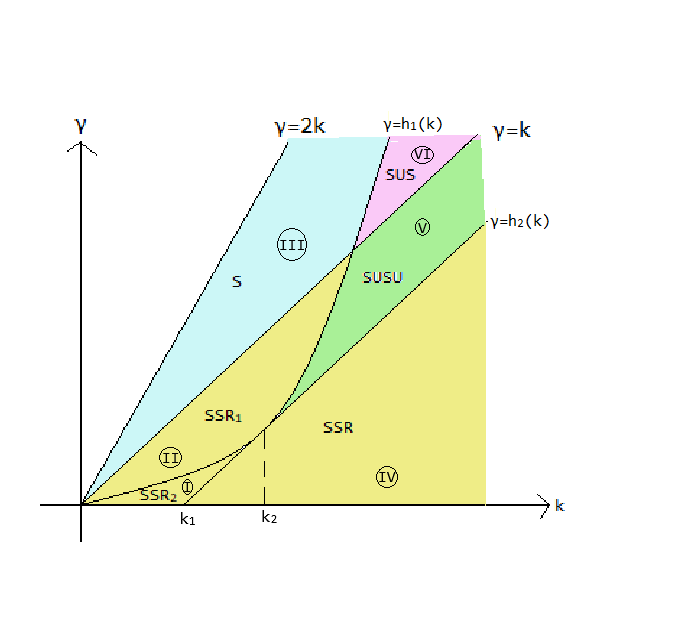}
		\caption{Stability regions of $0<\gamma<2k$ for $\alpha=0.4$}
		\label{kgamma}
	\end{figure}

	We have the following observations from this figure:
	\begin{itemize}
		\item In region (I), we have the scenarios as shown in Fig.(\ref{region1}):
\begin{figure}[H]
	\centering
	\begin{subfigure}[t]{0.49\textwidth}
		\centering
		\includegraphics[height=5cm, trim=50 20 50 10, clip]{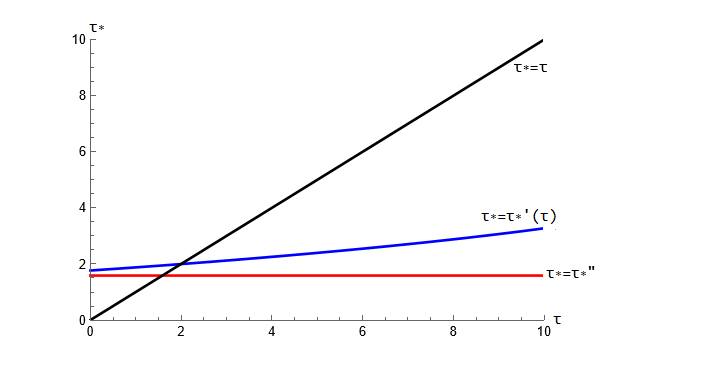}
		\caption{$\alpha=0.4,\; k=0.65,\; \gamma=0.04$}
	\end{subfigure}
	\hspace{0.12cm}
	\begin{subfigure}[t]{0.49\textwidth}
		\centering
		\includegraphics[height=5cm, trim=60 30 50 10, clip]{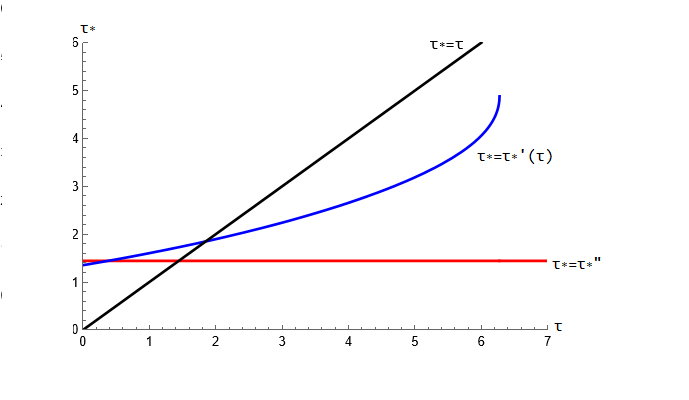}
		\caption{$\alpha=0.4,\; k=0.75,\; \gamma=0.11$}
	\end{subfigure}
	
	\caption{SSR behavior in region I for some set of parameter values}
	\label{region1}
\end{figure}
		Here, the intersection between the curve $\tau_*=\tau_*{'}(\tau)$ and $\tau=\tau_*$ is after $\tau_{*}{''}.$ So, there will be a SSR behavior with critical value $\tau_{*}{''} = \frac{-1}{\gamma} \log\left( \frac{k-\gamma}{k} \right).$
		
		\item In region (II), the following cases will be there (see Fig. (\ref{region II})):
		
		\begin{figure}[H]
			\centering
			% --- First row ---
			\begin{subfigure}{0.45\textwidth}
				\includegraphics[height=5cm, clip]{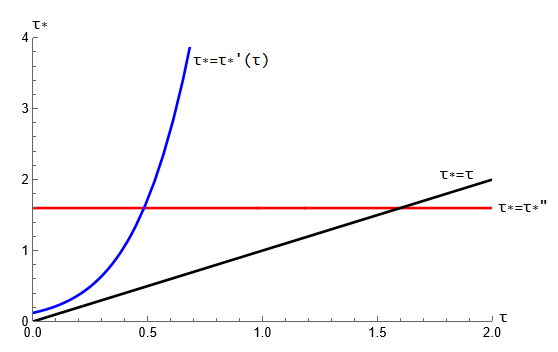}
				\caption{$\alpha=0.4,\; k=1.13,\; \gamma=0.83$}
			\end{subfigure}%
			\hfill
			\begin{subfigure}{0.45\textwidth}
				\includegraphics[height=5cm, clip]{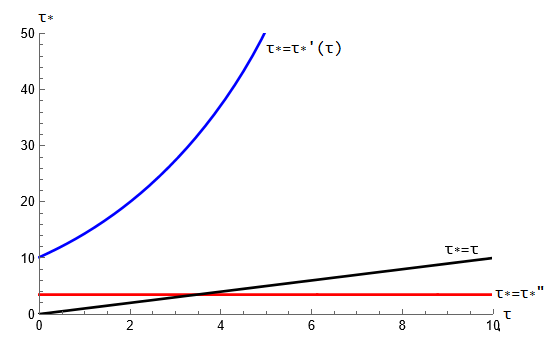}
				\caption{$\alpha=0.4,\; k=0.4,\; \gamma=0.2$}
			\end{subfigure}
			
			\vspace{0.5em}
			\caption{SSR behavior in region II for some set of parameter values}
			\label{region II}
		\end{figure}
		In both cases, $\tau$ is always below $\tau_*{'}(\tau)$. In this region, there is no intersection between the curve $\tau_*=\tau_{*}{'}(\tau)$ and $\tau=\tau_*$. So, again,  there will be an SSR behavior with critical value $\tau_{*}{''} = \frac{-1}{\gamma} \log\left( \frac{k-\gamma}{k} \right)$. Note that, the intersection between the curves $\tau=\tau*{'}(\tau)$ and $\tau=\tau_*{''}$ will not affect the qualitative properties of the system.
		
		\item In region (III), only one scenario is there, as shown in Fig.(\ref{region III}):
		\begin{figure}[H]
			\centering
			\includegraphics[scale=0.4]{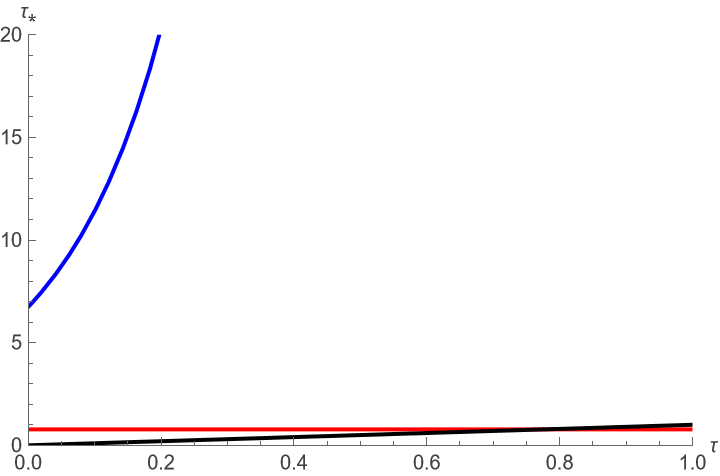}
			\caption{Stable behavior in region III with $\alpha=0.4,\; k=1,\; \gamma=1.35$}
			\label{region III}
		\end{figure}

		In this region, there is no intersection between the curve $\tau_*=\tau_{*}{'}(\tau)$ and $\tau=\tau_{*}{''}$, i.e, there is no critical value in the SSR region. Since $\gamma>k,$ i.e, $a<0$, the system will remain stable only. 
		
		\item In region (IV), we have the following cases (see Fig.(\ref{region IV})):
		\begin{figure}[H]
			\centering
			% --- First row ---
			\begin{subfigure}{0.45\textwidth}
				\includegraphics[height=5.5cm, trim=50 30 80 10, clip]{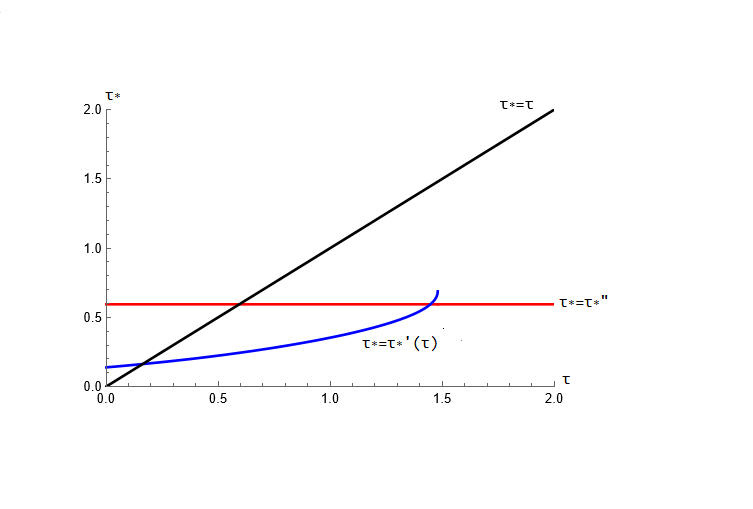}
				\caption{$\alpha=0.4,\;k=2,\;\gamma=0.6$}
			\end{subfigure}
			\hspace{0.2cm}
			\begin{subfigure}{0.45\textwidth}
				\includegraphics[height=5cm, trim=50 20 80 10, clip]{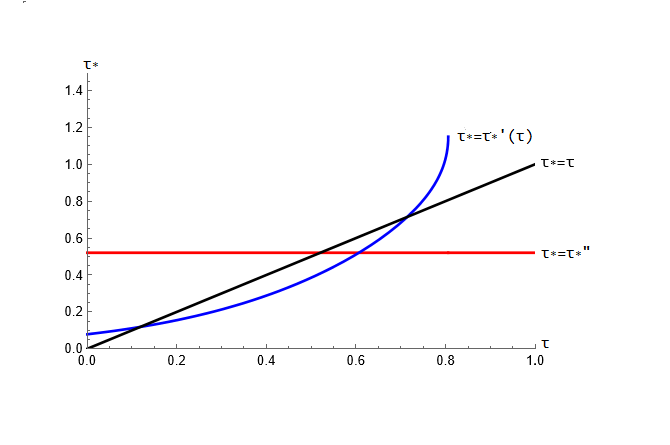}
				\caption{$\alpha=0.4,\;k=3,\;\gamma=1.86$}
			\end{subfigure}
			
			\vspace{0.5em}
			\caption{SSR behavior in region IV wih given values of parameters}
			\label{region IV}
		\end{figure}
		
		In this region, there is always one intersection between the curve $\tau_*=\tau_{*}{'}(\tau)$ and $\tau_*=\tau$ (say $\tau_{*a}{'}$) which is before $\tau_{*}{''}$. So, equilibrium will have SSR behavior with critical value as $\tau_{*a}{'}.$
		
		\item In region (V), only one behavior is observed (see Fig . (\ref{region V})):
		\begin{figure}[H]
			\centering
			\includegraphics[scale=0.4]{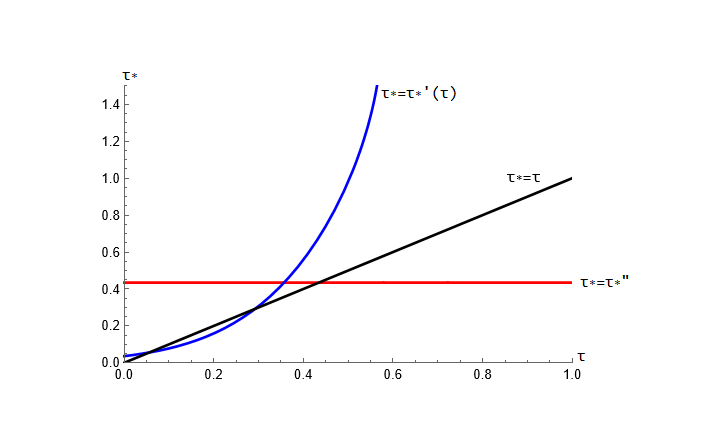}
			\caption{SUSU behavior in region V with $\alpha=0.4,\;k=4.62,\;\gamma=3.69$}
			\label{region V}
		\end{figure}
		It can be observed that there are two intersections (say $\tau_{*a}{'} \;\; \text{and}\;\; \tau_{*b}{'}$) before $\tau_{*}{''}.$ So, there will be SUSU (Stable-Unstable-Stable-Unstable) behavior in this region, i.e
		\begin{itemize}
			\item stable for $0<\tau<\tau_{*a}{'}$,
			\item unstable for $\tau_{*a}{'}<\tau<\tau_{*b}{'}$,
			\item  stable for $\tau_{*b}{'}<\tau<\tau_{*}{''}$ and
			\item  unstable for $\tau>\tau_{*}{''}$.
		\end{itemize}

		\item In region (VI), we get a figure similar to Fig.(\ref{region V}):
		\begin{figure}[H]
			\centering
			\includegraphics[scale=0.4]{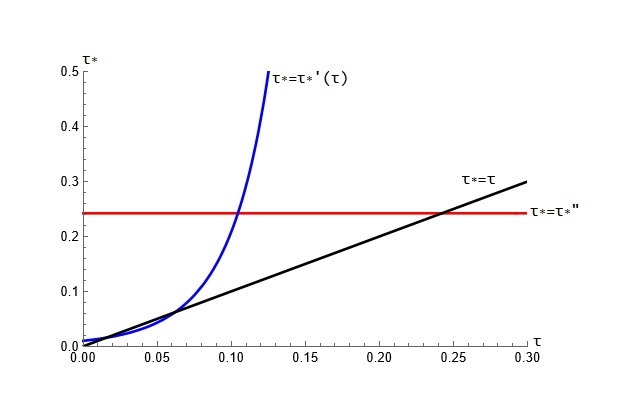}
			\caption{SUS behavior in region VI with $\alpha=0.4,k=9.8,\;\gamma=10.56$}
			%   \label{fig:tau_series}%
		\end{figure}
		However, $a<0$ and hence $\tau>\tau_*''$ gives stable solutions.
		Since, $\gamma>k$ in this region and  there are two intersections (say $\tau_{*a}' \;\; \text{and}\;\; \tau_{*b}'$) before $\tau_{*}''$. So, there will be SUS (Stable-Unstable-Stable) behavior in this region i.e
		\begin{itemize}
			\item stable for $0<\tau<\tau_{*a}{'}$,
			\item unstable for $\tau_{*a}{'}<\tau<\tau_{*b}{'}$ and
			\item  stable for $\tau>\tau_{*b}{'}$ 
		\end{itemize}
	\end{itemize}
		\subsection{Fourth Quadrant Analysis}
	
	In this subsection, we study the behaviour of the system in the fourth quadrant 
	of the $k$--$\gamma$ plane. Here, we have $a = k - \gamma > 0$ and $b(\tau) < 0$. 
	Also note that $b'(\tau) = k\gamma e^{-\gamma\tau} < 0 \;\; \text{since } \gamma < 0$, which means that $b(\tau)$ keeps decreasing as $\tau$ increases.
	
	At $\tau = 0$, we have $b(0) = -k<0$, and since $\gamma < 0$, this gives  
	$b(0) = -k > \gamma - k = -a$.  
	Thus, the equilibrium point is initially in the unstable region (see the arrow  $T_3$ in Fig. (\ref{idea})).  
	As $\tau$ increases, the value of $b(\tau)$ decreases further and eventually enters the 
	SSR region.
	
	In this quadrant, two important values of $\tau$ appear:
	\begin{itemize}
		\item $\tau_{*}{''}$, which is obtained from the boundary condition $b(\tau_{*}{''}) = -a$,
		\item $\tau_{*}{'}$, which comes from the condition of the SSR region.
	\end{itemize}
	
	To understand which behaviour appears, we compare the curve $\tau_{*}{'}(\tau)$ with the 
	line $\tau_{*} = \tau$. We observed that, for the parameter values $k>0$ and $\gamma<0$, the function $\tau_{*}{'}(\tau)$ decreases steadily 
	and approaches zero as $\tau$ increases. Therefore, there is always exactly one intersection 
	between the curves $\tau_{*} = \tau$ and $\tau_{*} = \tau_{*}{'}(\tau)$. Because of this, the 
	curve $\gamma = h_{1}(k)$ does not arise in this quadrant. Only the bifurcation curve 
	$\gamma=h_{2}(k)$ appears, and it is obtained from the condition 
	$\tau_{*}{'}(\tau_{*}{''}) = \tau_{*}{''}$.
	
	This curve $\gamma=h_{2}(k)$ separates two kinds of behaviour:
	\begin{itemize}
		\item \textbf{Case 1:} The intersection between $\tau_{*}{'}(\tau)$ and $\tau$ occurs 
		before $\tau_{*}{''}$.  
		In this case, the stability does not change, and the equilibrium point remains unstable 
		for all $\tau$.
		
		\item \textbf{Case 2:} The intersection (say at $\tau_{*a}{'}$) occurs after
		$\tau_{*}{''}$ (Fig.(18(a)).  
		Here, the system enters the SSR region for some interval of $\tau$, leading to a 
		USU behaviour (Unstable–Stable–Unstable), as illustrated in (Fig.(18(c)).  
		That is:
		\begin{itemize}
			\item the equilibrium is unstable for $0 < \tau < \tau_{*}{''}$,
			\item becomes stable for $\tau_{*}{''} < \tau < \tau_{*a}{'}$, and
			\item becomes unstable again when $\tau > \tau_{*a}{'}$.
		\end{itemize}
	\end{itemize}
	
	\begin{figure}[H]
		\centering
		\includegraphics[scale=0.4]{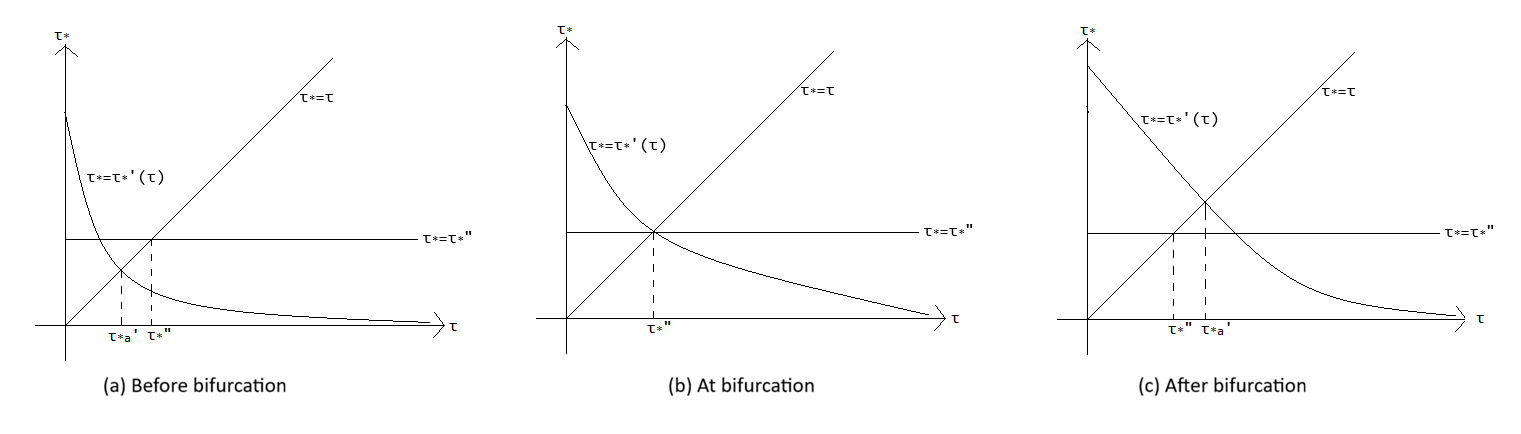}
		\caption{Geometric interpretation of the curve $\gamma = h_{2}(k)$ in the fourth quadrant}
		\label{h2}
	\end{figure}
	
	\begin{figure}[H]
		\centering
		\includegraphics[scale=0.7]{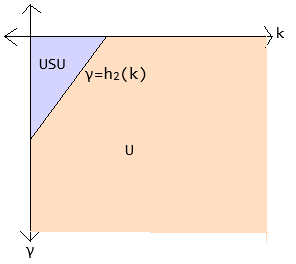}
		\caption{Stability behaviour in the fourth quadrant}
		\label{c14q}
	\end{figure}
\subsection{Examples for $\tau_1 = 0$}

We present illustrative examples for the subregions in which the critical delay value depends on $\alpha$. Here, we fix $\alpha = 0.4$.

\textbf{Example 4.1:} Let $k = 2$ and $\gamma = 0.6$. These parameter values lie in subregion IV (see Fig. \ref{kgamma}), which corresponds to a Single Stable Region (SSR) with critical delay $\tau_{*a'}$. The critical value $\tau_{*a'} = 0.1630$ is obtained as the first intersection point of the curves $\tau_*= \tau_*'(\tau)$ and $\tau_* = \tau$.

For $\tau = 0.14$, the equilibrium is stable (see Fig. \ref{fig3_4_1}), whereas it becomes unstable for $\tau = 0.52$ and $\tau = 0.67$ (see Figs. \ref{fig3_4_2} and \ref{fig3_4_3}).
\begin{figure}[H]
	\centering
	\begin{subfigure}{0.32\textwidth}
		\centering
		\includegraphics[width=\linewidth]{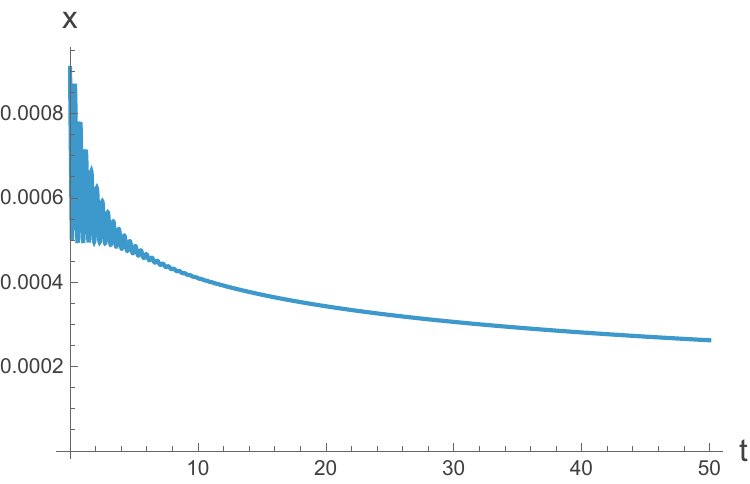}
		\caption{$\tau=0.14$}
		\label{fig3_4_1}
	\end{subfigure}
	\hfill
	\begin{subfigure}{0.32\textwidth}
		\centering
		\includegraphics[width=\linewidth]{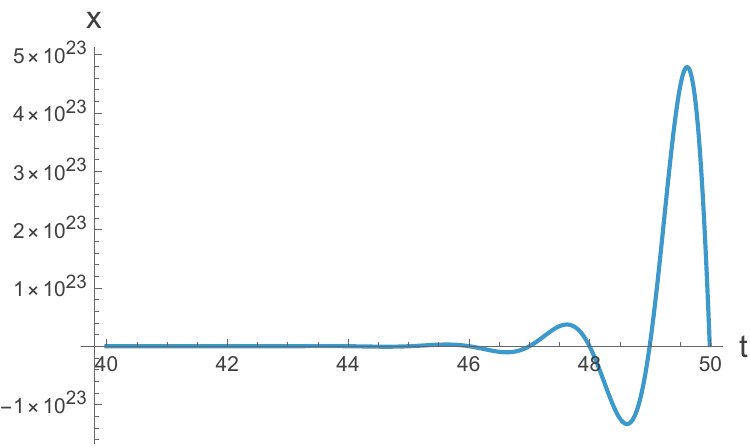}
		\caption{$\tau = 0.52$}
		\label{fig3_4_2}
	\end{subfigure}
	\hfill	
	\begin{subfigure}{0.32\textwidth}
		\centering
		\includegraphics[width=\linewidth]{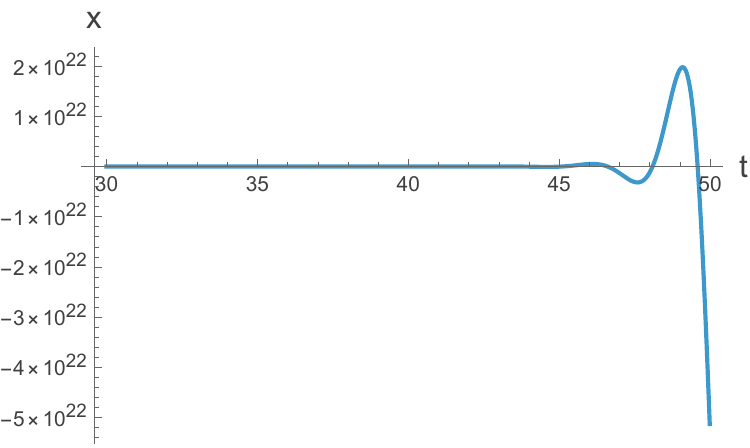}
		\caption{$\tau = 0.67$}
		\label{fig3_4_3}
	\end{subfigure}
	\caption{SSR behavior for region IV}
\end{figure}
\textbf{Example 4.2:} Consider the parameter values $k = 4.62$ and $\gamma = 3.69$. These values lie in subregion V (see Fig.~\ref{kgamma}), which exhibits a Stable-Unstable-Stable--Unstable (SUSU) switching behavior. This behavior is characterized by the critical delay values $\tau_{*a'}$, $\tau_{*b'}$, and $\tau_*^{''}$.

The critical values $\tau_{*a'} = 0.0560$ and $\tau_{*b'} = 0.2925$ correspond to the first and second intersection points of the curves $\tau_*'(\tau)$ and $\tau_* = \tau$, respectively. The third critical value $\tau_*^{''} = 0.4344$ is obtained from the condition $b(\tau) = -a$.

The stability of the equilibrium changes as $\tau$ varies. For $\tau = 0.04$, the equilibrium is stable (see Fig.~\ref{fig3_4_4}). When $\tau = 0.25$ (see Fig.~\ref{fig3_4_5}), the system becomes unstable. Increasing $\tau$ further to $0.38$ (see Fig.~\ref{fig3_4_6}) restores stability. However, for $\tau = 0.56$ (see Fig.~\ref{fig3_4_7}), the equilibrium again loses stability and remains unstable thereafter.
\begin{figure}[H]
	\centering
	\begin{subfigure}{0.24\textwidth}
		\centering
		\includegraphics[width=\linewidth]{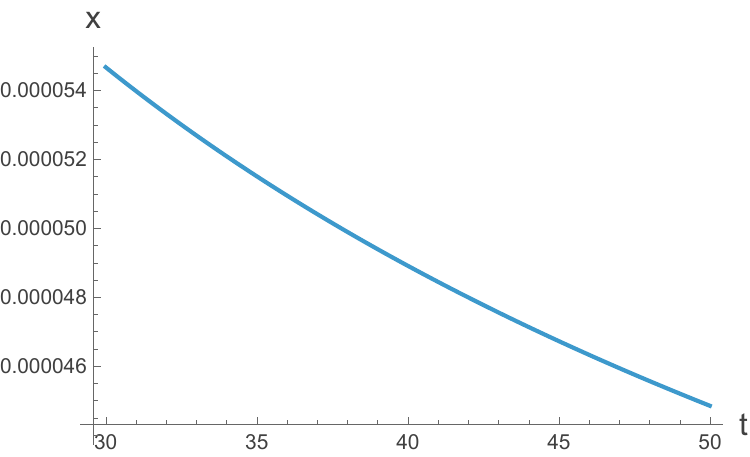}
		\caption{$\tau=0.04$}
		\label{fig3_4_4}
	\end{subfigure}
	\hfill
	\begin{subfigure}{0.24\textwidth}
		\centering
		\includegraphics[width=\linewidth]{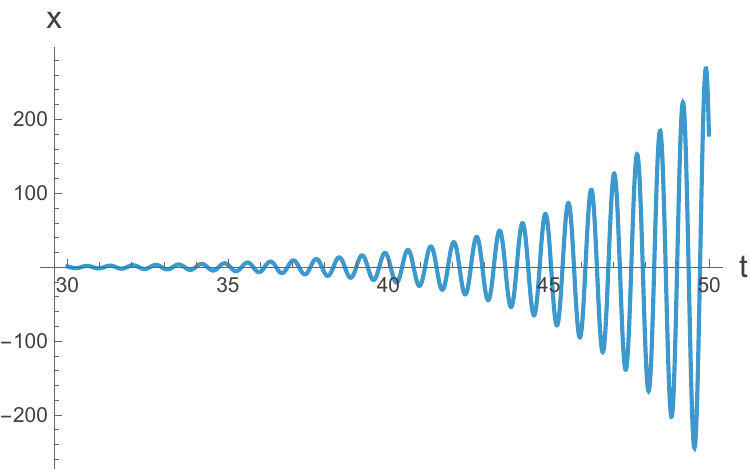}
		\caption{$\tau = 0.25$}
		\label{fig3_4_5}
	\end{subfigure}
	\hfill
	\begin{subfigure}{0.24\textwidth}
		\centering
		\includegraphics[width=\linewidth]{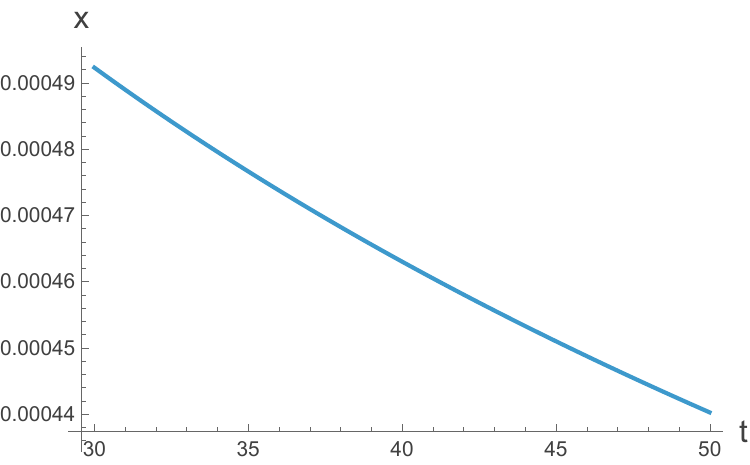}
		\caption{$\tau = 0.38$}
		\label{fig3_4_6}
	\end{subfigure}
	\hfill
	\begin{subfigure}{0.24\textwidth}
		\centering
		\includegraphics[width=\linewidth]{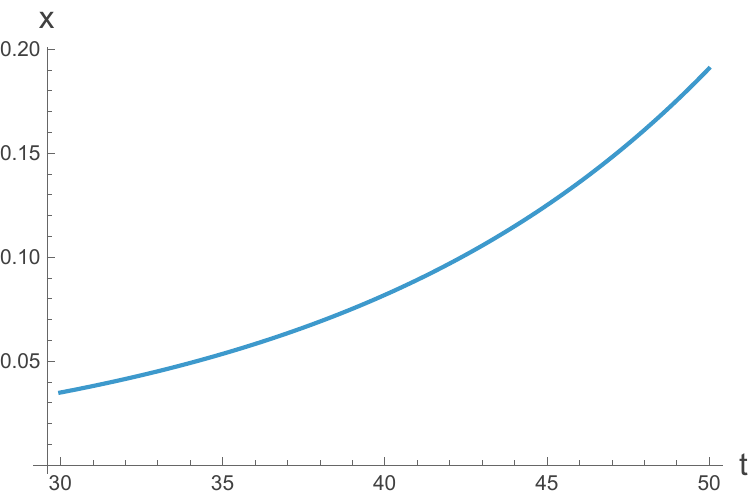}
		\caption{$\tau = 0.56$}
		\label{fig3_4_7}
	\end{subfigure}
	\caption{SUSU switching behavior in subregion V}
\end{figure}

\textbf{Example 4.3:} For $k = 9.8$ and $\gamma = 10.56$, the parameter pair falls within subregion VI (see Fig. \ref{kgamma}). In this region, the system demonstrates a Stable–Unstable–Stable (SUS) transition pattern governed by two critical delay thresholds, $\tau_{*a'}$ and $\tau_{*b'}$. Numerically, these are $\tau_{*a'} = 0.0157$ and $\tau_{*b'} = 0.0619$, obtained as the first two points of intersection between the curves $\tau_* = \tau_*'(\tau)$ and $\tau_ *= \tau$.

The effect of the delay parameter $\tau$ on stability can be observed through representative values. The equilibrium remains stable at $\tau = 0.01$ (Fig. \ref{fig3_4_8}). As $\tau$ increases past the first threshold, instability arises, as seen at $\tau = 0.05$ (Fig. \ref{fig3_4_9}). Upon further increase in $\tau$, stability is recovered; this is illustrated at $\tau = 0.25$ (Fig. \ref{fig3_4_10}).

\begin{figure}[H]
	\centering
	\begin{subfigure}{0.3\textwidth}
		\centering
		\includegraphics[width=\linewidth]{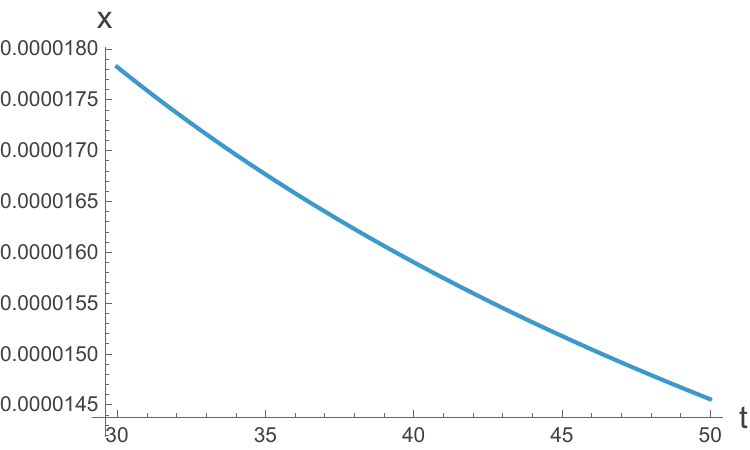}
		\caption{$\tau=0.01$}
		\label{fig3_4_8}
	\end{subfigure}
	\hfill
	\begin{subfigure}{0.3\textwidth}
		\centering
		\includegraphics[width=\linewidth]{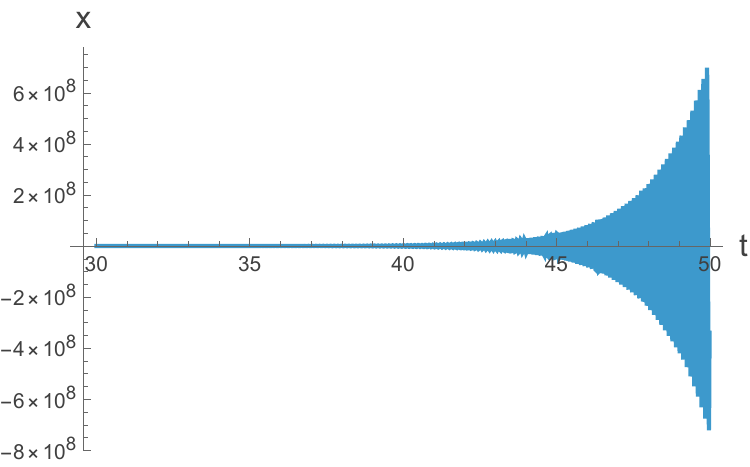}
		\caption{$\tau = 0.05$}
		\label{fig3_4_9}
	\end{subfigure}
	\hfill
	\begin{subfigure}{0.3\textwidth}
		\centering
		\includegraphics[width=\linewidth]{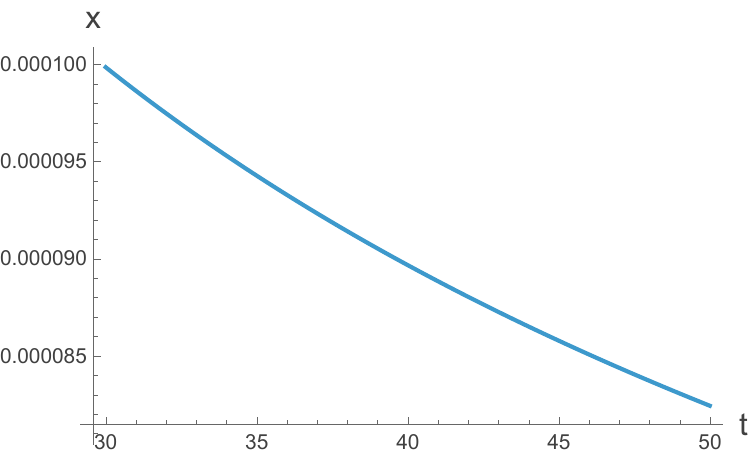}
		\caption{$\tau = 0.25$}
		\label{fig3_4_10}
	\end{subfigure}
	
	\caption{SUS switching behavior in subregion VI}
\end{figure}

 \textbf{Example 4.4:} 
 Let $k = 0.23$ and $\gamma = -0.12$. These parameter values lie in the fourth quadrant of the $k$--$\gamma$ plane (see Fig.~\ref{c14q}) and correspond to an Unstable-Stable-Unstable (USU) switching behavior.
 
 In this case, two critical delay values arise:
 $
 \tau_*^{''} = 3.4987, \; \tau_{*a}' = 5.4386,
 $
 where $\tau_*^{''}$ is obtained from the condition $b(\tau) = -a$, and $\tau_{*a}'$ is the point of intersection of the curves $\tau_* = \tau_*'(\tau)$ and $\tau_* = \tau$, occurring after $\tau_*^{''}$.
 
 The stability of the equilibrium changes as $\tau$ varies. For $\tau = 3.35 < \tau_*^{''}$, the equilibrium is unstable (see Fig.~\ref{fig3_4_11}). For $\tau = 5.3$, satisfying $\tau_*^{''} < \tau < \tau_{*a}'$, the equilibrium becomes stable (see Fig.~\ref{fig3_4_12}). For $\tau = 5.8 > \tau_{*a}'$, the equilibrium again becomes unstable (see Fig.~\ref{fig3_4_13}).
 
 \begin{figure}[H]
 	\centering
 	\begin{subfigure}{0.3\textwidth}
 		\centering
 		\includegraphics[width=\linewidth]{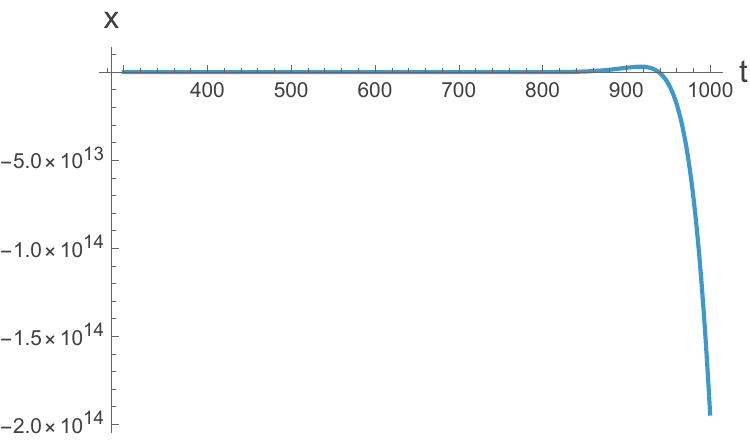}
 		\caption{$\tau = 3.35$}
 		\label{fig3_4_11}
 	\end{subfigure}
 	\hfill
 	\begin{subfigure}{0.3\textwidth}
 		\centering
 		\includegraphics[width=\linewidth]{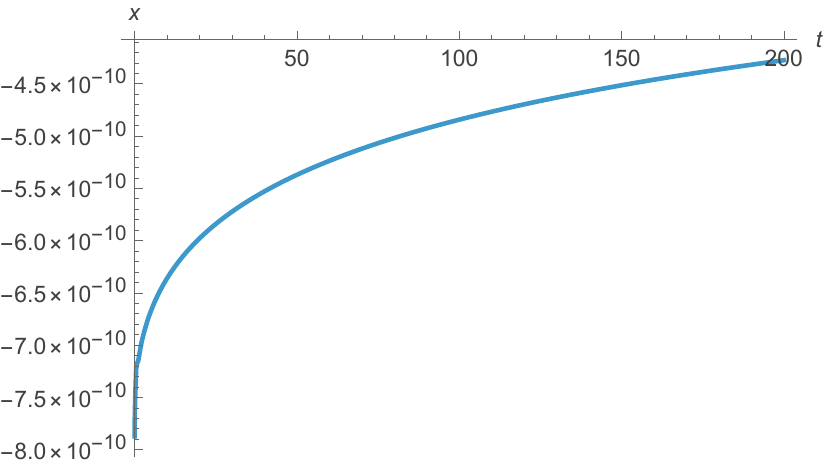}
 		\caption{$\tau = 5.3$}
 		\label{fig3_4_12}
 	\end{subfigure}
 	\hfill
 	\begin{subfigure}{0.3\textwidth}
 		\centering
 		\includegraphics[width=\linewidth]{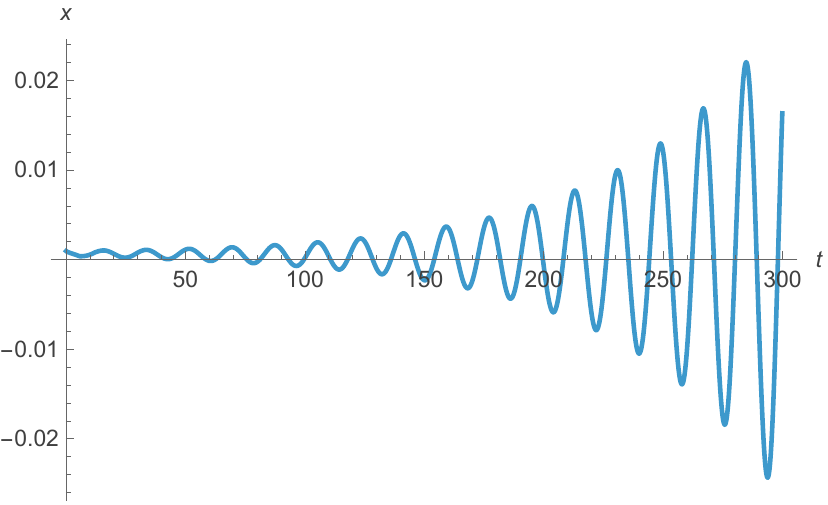}
 		\caption{$\tau = 5.8$}
 		\label{fig3_4_13}
 	\end{subfigure}
 	
 	\caption{USU switching behavior in the fourth quadrant}
 \end{figure}

	\section{Case 2: $\tau_1>0,\;\tau_2\ge0$}
		\label{sec:C2}
	In this case, both delays are retained. The linearized version of this equation is given by (\ref{eq:linear}).
	The corresponding characteristic equation is given by $(\ref{eq:char})$.

	We now present some delay-independent results summarizing stability behaviors for this case.
\begin{theorem}
	\label{thmcase2_unified}
	The equilibrium point $x_* = 0$ of Eq.~(\ref{eq:main}) i.e 
		\begin{equation}
		D^{\alpha} x(t)
		= -\gamma x(t)
		+ g\big(x(t - \tau_1)\big)
		- e^{-\gamma \tau_2}\, g\big(x(t - \tau_1 - \tau_2)\big),
		\qquad 0 < \alpha \le 1
	\end{equation}
	 exhibits the following stability behavior:
	
	\textbf{Delay-Independent Instability:}
	\begin{enumerate}[label=(\roman*)]
		\item If $0 < \gamma < k$, then for 
		\[
		\tau_2 > -\frac{1}{\gamma} \log\left(\frac{k - \gamma}{k}\right),
		\]
		the equilibrium is unstable for all $\tau_1 \ge 0$ and $0 < \alpha \le 1$.
		
		\item If $\gamma < 0 < k$, then for 
		\[
		\tau_2 < -\frac{1}{\gamma} \log\left(\frac{k - \gamma}{k}\right),
		\]
		the equilibrium is unstable for all $\tau_1 \ge 0$ and $0 < \alpha \le 1$.
	\end{enumerate}
	
\end{theorem}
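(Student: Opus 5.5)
Throughout we work with the characteristic function
\[
F(\lambda)=\lambda^{\alpha}+\gamma-k e^{-\lambda\tau_1}+k e^{-\gamma\tau_2}e^{-\lambda(\tau_1+\tau_2)},
\]
restricted to real $\lambda\ge 0$, where $\lambda^{\alpha}$ is real and nonnegative. The plan is to exhibit a positive real root of (\ref{eq:char}) via the intermediate value theorem. A root with positive real part makes the zero solution of the linearization (\ref{eq:linear}) unstable, and hence also the equilibrium $x_*=0$ of (\ref{eq:main}).

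\textbf{Sign as $\lambda\to\infty$.} For fixed $\tau_1,\tau_2\ge0$ the exponential terms are bounded by $|k|(1+e^{-\gamma\tau_2})$ while $\lambda^\alpha\to\infty$. Hence $F(\lambda)\to+\infty$ as $\lambda\to+\infty$. This holds uniformly in the sign of $\gamma$ and for every $\alpha\in(0,1]$.

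\textbf{Sign at $\lambda=0$.} We have
\[
F(0)=\gamma-k+k e^{-\gamma\tau_2}=k\bigl(e^{-\gamma\tau_2}-\tfrac{k-\gamma}{k}\bigr),
\]
which is independent of $\tau_1$ and $\alpha$. This is precisely why the instability conclusion will be uniform in $\tau_1$ and $\alpha$.
\begin{enumerate}[label=(\roman*)]
\item If $0<\gamma<k$, the map $\tau_2\mapsto e^{-\gamma\tau_2}$ is decreasing. So $\tau_2>-\frac1\gamma\log\frac{k-\gamma}{k}$ gives $e^{-\gamma\tau_2}<\frac{k-\gamma}{k}$, and since $k>0$ we get $F(0)<0$.
\item If $\gamma<0<k$, then $\frac{k-\gamma}{k}>1$, the threshold is positive, and $e^{-\gamma\tau_2}$ is increasing in $\tau_2$. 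So $\tau_2<-\frac1\gamma\log\frac{k-\gamma}{k}$ gives $e^{-\gamma\tau_2}<\frac{k-\gamma}{k}$, and again $F(0)<0$.
\end{enumerate}
In both cases these conditions are exactly $b(\tau_2)>-a$ from Section~\ref{sec:C1}, i.e.\ $-\gamma+k-ke^{-\gamma\tau_2}>0$.

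\textbf{Conclusion and the care point.} Continuity of $F$ on $[0,\infty)$, together with $F(0)<0$ and $F(\lambda)\to+\infty$, yields a real root $\lambda_0>0$ of (\ref{eq:char}). The only delicate step is this final inference, which needs two justifications.
\begin{itemize}
\item The branch of $\lambda^{\alpha}$ in (\ref{eq:char}) must be the principal one, obtained from the Laplace transform of the Caputo derivative. Then for real positive $\lambda$ the value is real, and a root in the right half-plane indeed gives instability.
\item For $0<\alpha<1$, instability should be justified by the standard result that the linear FDDE is asymptotically stable iff all characteristic roots have negative real part, with a root of positive real part yielding unbounded growth of the solution's Laplace transform. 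I would cite the same framework behind Theorem~\ref{thm:prelim} for this.
\end{itemize}
Alternatively, for $\tau_1=0$ the result reduces directly to part 2 of Theorem~\ref{thm:prelim}, which gives a useful consistency check.
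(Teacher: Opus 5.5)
Your proposal is correct and matches the paper's proof essentially step for step: the paper evaluates $\Delta(0)=\gamma-k+ke^{-\gamma\tau_2}$, notes $\Delta(\lambda)\to+\infty$ as $\lambda\to+\infty$, applies the intermediate value theorem to get a positive real root, and reduces $\Delta(0)<0$ to $e^{-\gamma\tau_2}<\frac{k-\gamma}{k}$ separately in the cases $0<\gamma<k$ and $\gamma<0<k$. Your added remarks go slightly beyond what the paper writes down. These are the principal branch of $\lambda^\alpha$, why a positive root implies instability for $0<\alpha<1$, and the identification with $b(\tau_2)>-a$. They do not change the argument.
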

\begin{proof}
	Consider the linearized version of  fractional delay differential equation (\ref{eq:main}) i.e 
	\begin{equation}
		D^{\alpha} x(t)
		= -\gamma x(t)
		+ k\,x(t - \tau_1)
		- k e^{-\gamma \tau_2}\, x(t - \tau_1 - \tau_2),
	\end{equation}
	where $0<\alpha\le 1$. The stability of the equilibrium $x_*=0$ is determined by the roots of the corresponding characteristic equation i.e
	\[
	\lambda^{\alpha}
	= -\gamma + k e^{-\lambda \tau_1}
	- k e^{-\gamma \tau_2} e^{-\lambda(\tau_1+\tau_2)}.
	\]
	Equivalently,
	\begin{equation}
		\lambda^{\alpha} + \gamma
		- k e^{-\lambda \tau_1}
		+ k e^{-\gamma \tau_2} e^{-\lambda(\tau_1+\tau_2)}=0.
		\label{char}
	\end{equation}
	
	\medskip

	Define
	\[
	\Delta(\lambda)
	:= \lambda^{\alpha} + \gamma
	- k e^{-\lambda \tau_1}
	+ k e^{-\gamma \tau_2} e^{-\lambda(\tau_1+\tau_2)}.
	\]
	Evaluating at $\lambda=0$, we get
	\[
	\Delta(0)
	= \gamma - k + k e^{-\gamma \tau_2}.
	\]
	
	The sign of $\Delta(0)$ plays a key role in determining the existence of positive real roots.
	
	\medskip

	For real $\lambda>0$, note that
	\[
	\lambda^{\alpha} \to +\infty \quad \text{as } \lambda \to +\infty,
	\]
	while the exponential terms remain bounded. Hence,
	\[
	\Delta(\lambda)\to +\infty \quad \text{as } \lambda\to+\infty.
	\]
	
	Thus, if $\Delta(0)<0$, then by continuity there exists $\lambda>0$ such that $\Delta(\lambda)=0$. This yields a real positive root, implying instability.
	
	\medskip
	\noindent

	We have
	\[
	\Delta(0)<0
	\quad \Longleftrightarrow \quad
	\gamma - k + k e^{-\gamma \tau_2} < 0,
	\]
	which simplifies to
	\[
	e^{-\gamma \tau_2} < \frac{k-\gamma}{k}.
	\]
	
	\medskip
	\noindent
	\textbf{Case (i): $0<\gamma<k$.}\\

	Here $\frac{k-\gamma}{k}\in(0,1)$. Taking logarithm, we get,
	\[
	-\gamma \tau_2 < \log\left(\frac{k-\gamma}{k}\right).
	\]
	
	\[
\Leftrightarrow	\tau_2 > -\frac{1}{\gamma}\log\left(\frac{k-\gamma}{k}\right).
	\]
	Thus, $\Delta(0)<0$, and hence a positive real root exists. Therefore, the equilibrium is unstable. The condition is independent of $\tau_1$, so instability holds for all $\tau_1\ge 0$.
	
	\medskip
	\noindent
	\textbf{Case (ii): $\gamma<0<k$.}
\hspace{0.2cm}	
	Now $\frac{k-\gamma}{k}>1$. Again,
	\[
	e^{-\gamma \tau_2} < \frac{k-\gamma}{k}.
	\]
	Taking logarithm, we get,
	\[
	-\gamma \tau_2 < \log\left(\frac{k-\gamma}{k}\right).
	\]

	\[
\Leftrightarrow	\tau_2 < -\frac{1}{\gamma}\log\left(\frac{k-\gamma}{k}\right).
	\]
	Thus, $\Delta(0)<0$, implying existence of a positive real root and hence instability for all $\tau_1\ge 0$.
	
	\medskip
	\noindent

	The condition for instability depends only on $\Delta(0)$, which does not involve $\tau_1$. Therefore, the instability is delay-independent with respect to $\tau_1$.
	
	\medskip
	\noindent
	Hence, in both cases, the equilibrium $x_*=0$ is unstable for all $\tau_1\ge 0$ whenever the stated condition on $\tau_2$ holds.
\end{proof}
\textbf{Example 5.1:} $\alpha = 0.3,\; k = 1.4,\; \gamma = 0.8$

Since $0 < \gamma < k$, the system is unstable for
\[
\tau_2 > -\frac{1}{\gamma} \log\left(\frac{k - \gamma}{k}\right) = 1.0591,
\quad \forall \; \tau_1 \ge 0.
\]
Let $\tau_2 = 1.2 > 1.0591$. Then the system is unstable. 
Choose $\tau_1 = 2.3$ and $\tau_1 = 4.7$ (see Fig.~\ref{fig:ex1}).

\begin{figure}[h]
	\centering
	\begin{subfigure}{0.3\textwidth}
		\centering
		\includegraphics[height=4.5cm,  clip]{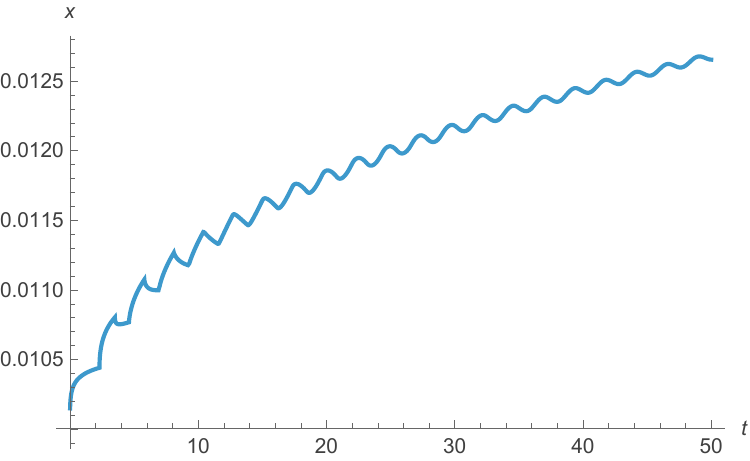}
		\caption{$\tau_1 = 2.3$}
	\end{subfigure}
\hspace{0.2\textwidth}
	\begin{subfigure}{0.3\textwidth}
		\centering
		\includegraphics[height=4.5cm, clip]{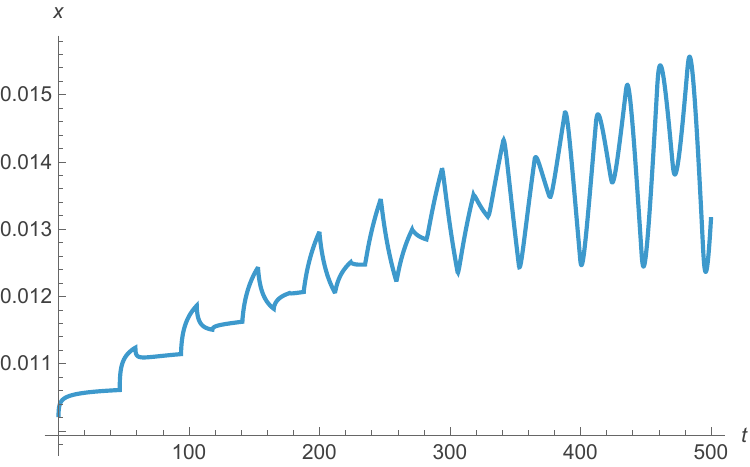}
		\caption{$\tau_1 = 4.7$}
	\end{subfigure}
	\caption{Unstable behavior for Example 1}
	\label{fig:ex1}
\end{figure}

\textbf{Example 5.2:} $\alpha = 0.8,\; k = 3.4,\; \gamma = -1.6$

Since $\gamma < 0 < k$, the system is unstable for
\[
\tau_2 < -\frac{1}{\gamma} \log\left(\frac{k - \gamma}{k}\right) = 0.2410,
\quad \forall \; \tau_1 \ge 0.
\]
Let $\tau_2 = 0.15 < 0.2410$. Then the system is unstable. 
Choose $\tau_1 = 3.4$ and $\tau_1 = 6.1$ (see Fig.~\ref{fig:ex2}).

\begin{figure}[H]
	\centering
	\begin{subfigure}{0.3\textwidth}
		\centering
		\includegraphics[height=4.5cm,  clip]{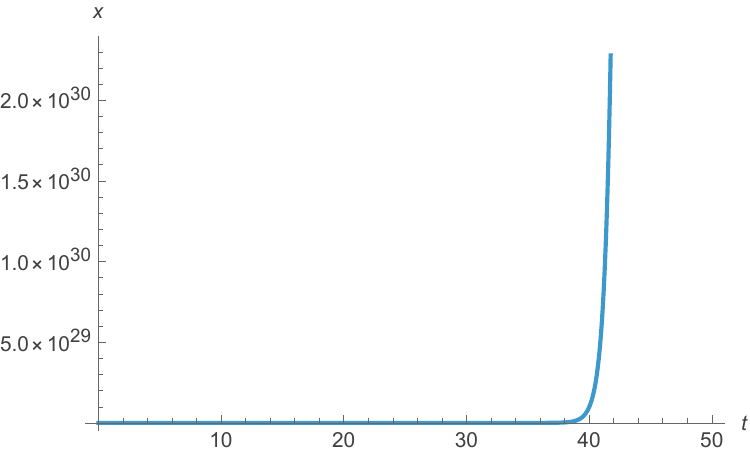}
		\caption{$\tau_1 = 3.4$}
	\end{subfigure}
\hspace{0.2\textwidth}
	\begin{subfigure}{0.3\textwidth}
		\centering
		\includegraphics[height=4.5cm, clip]{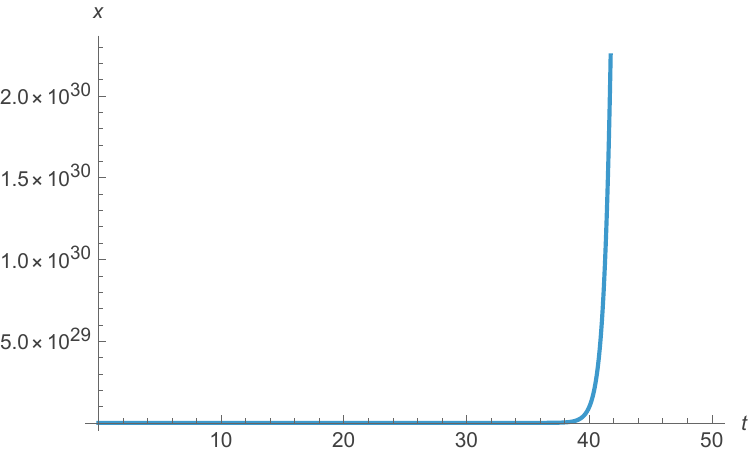}
		\caption{$\tau_1 = 6.1$}
	\end{subfigure}
	\caption{Unstable behavior for Example 2}
	\label{fig:ex2}
\end{figure}
	\section{Stability Diagram in the $\tau_1$–$\tau_2$ Plane}
	\label{sec:C3}
	In this section, we present examples illustrating the stability diagrams in the $\tau_1$–$\tau_2$ plane. 
	To obtain these diagrams, we solve the system obtained by separating the real and imaginary parts of equation~(\ref{eq:char}) after substituting $\lambda = i v$:
	
	\begin{align}
		\begin{split}
			v^{\alpha} \cos\!\left(\dfrac{\alpha \pi}{2}\right)
			+ \gamma 
			- k \cos(v \tau_1)
			+ k e^{-\gamma \tau_2} \cos\!\big(v(\tau_1 + \tau_2)\big)
			= 0, \\[1em]
			v^{\alpha} \sin\!\left(\dfrac{\alpha \pi}{2}\right)
			+ k \sin(v \tau_1)
			- k e^{-\gamma \tau_2} \sin\!\big(v(\tau_1 + \tau_2)\big)
			= 0.
		\end{split}
		\label{eqn:system}
	\end{align}
	
	These equations are solved for positive pairs $(\tau_1, \tau_2)$ by taking $v \in (0, 2\pi)$ by using the "FindRoot" command in Mathematica. 
	For certain values of $v$, the corresponding $\tau_1$ may become negative. 
	However, by adding $2\pi/v$ to those values, we obtain positive ones. 
	This correction is valid because of the periodic nature of the trigonometric terms in~(\ref{eqn:system}). 
	All such positive pairs $(\tau_1, \tau_2)$ are then plotted to form the stability diagram in the $\tau_1$–$\tau_2$ plane. Below are the examples in which the mentioned procedure is being followed, and stability diagrams in the $\tau_1-\tau_2$ plane are presented.
	
\textbf{Example 6.1}
	
	Consider the parameters $\alpha = 0.4$, $k = 1.02$, and $\gamma = 0.3$. 
	The resulting plot is shown in Fig. (\ref{eg5_1}).
	
	\begin{figure}[h!]
		\centering
		\includegraphics[scale=0.4]{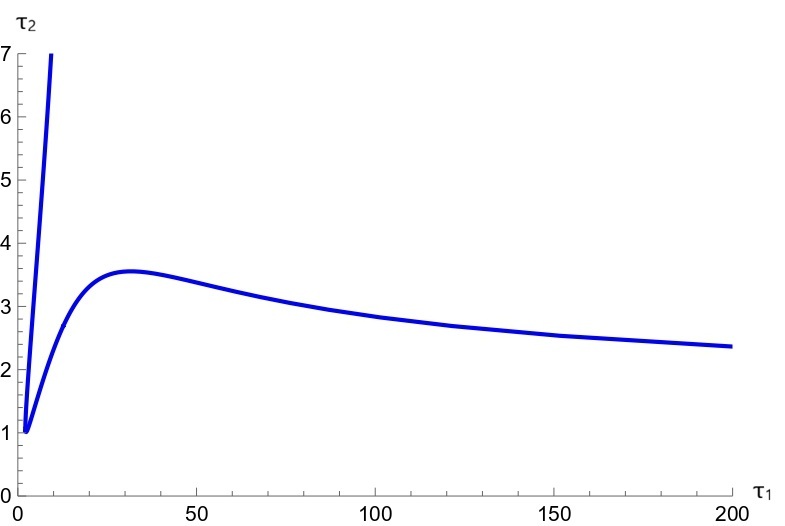}
		\caption{Stability diagram in the $\tau_1$–$\tau_2$ plane for $\alpha=0.4$, $k=1.02$, and $\gamma=0.3$.}
		\label{eg5_1}
	\end{figure}
	
	Ideally, the stable region should extend from the origin up to the boundary of the diagram. 
	However, in this case, that does not occur. 
	The boundary corresponds to $\operatorname{Re}(\lambda) = 0$, but there exists a critical value of $\tau_2 > 0$ where $\lambda = 0$. 
	Substituting $\lambda = 0$ into the characteristic equation~(\ref{eq:char}) gives
	\[
	\gamma - k + k e^{-\gamma \tau_{2a*}} = 0,
	\]
	which simplifies to
	\begin{equation}
		\label{tau2}
		\tau_{2a*} = \frac{-1}{\gamma}\log\!\left(\frac{k - \gamma}{k}\right).
	\end{equation}
	For the given parameter values, we find $\tau_{2a*} = 1.16102$.
	
	\begin{figure}[h!]
		\centering
		\includegraphics[width=0.5\linewidth]{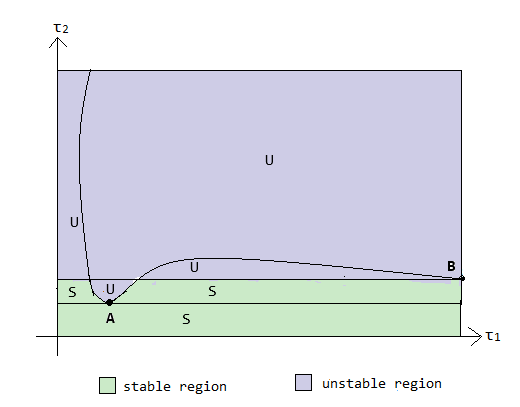}
		\caption{Critical points on the stability boundary in the $\tau_1$–$\tau_2$ plane (Not upto scale).}
		\label{eg5_2}
	\end{figure}
	
	This point is denoted as point~B in Fig. \ref{eg5_2}. 
	At the minimum point A, $ \tau_2=\tau_{2b*} = 1.0674$, which is less than $\tau_{2a*}$. 
	Hence, $\tau_{2b*}$ is the first critical value of $\tau_2$. 
The stability behavior can be summarized as follows:
\begin{itemize}
	\item For $0 < \tau_2 < \tau_{2b*}$, the equilibrium remains \textbf{stable} for all values of $\tau_1$.
	\item For $\tau_{2b*} < \tau_2 < \tau_{2a*}$, a switch \textbf{Stable-Unstable-Stable (SUS)} exists in terms of $\tau_1$, as illustrated in Fig.~\ref{eg5_2}.
	\item For $\tau_2 > \tau_{2a*}$, the equilibrium becomes \textbf{unstable} for all values of $\tau_1$.
\end{itemize}

Examples to verify these behaviors:
\begin{enumerate}
	\item Let $\tau_2 = 1.04 < \tau_{2b*}$. In this case, the equilibrium is stable for all $\tau_1$. For instance, choosing $\tau_1 = 0.2$, $\tau_1 = 1.6$, and $\tau_1 = 2.4$ confirms the stability (see Fig.~\ref{3_6_ex1}).
	
	\begin{figure}[H]
		\centering
		\begin{subfigure}{0.3\textwidth}
			\centering
			\includegraphics[width=\linewidth]{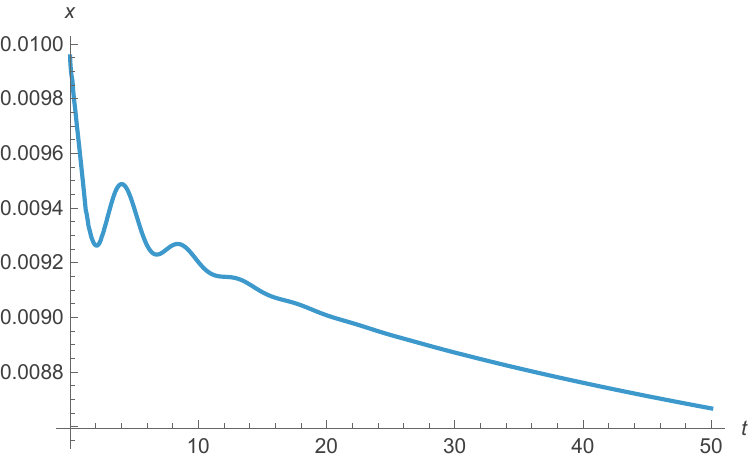}
			\caption{$\tau_1 = 0.2$}
		\end{subfigure}
		\hfill
		\begin{subfigure}{0.3\textwidth}
			\centering
			\includegraphics[width=\linewidth]{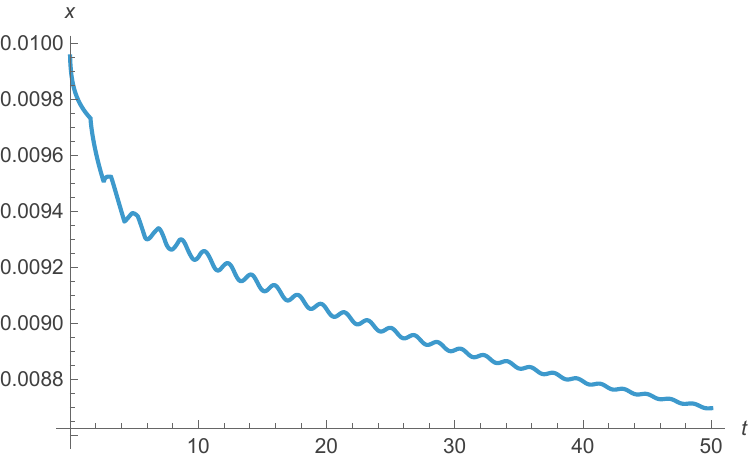}
			\caption{$\tau_1 = 1.6$}
		\end{subfigure}
		\hfill
		\begin{subfigure}{0.3\textwidth}
			\centering
			\includegraphics[width=\linewidth]{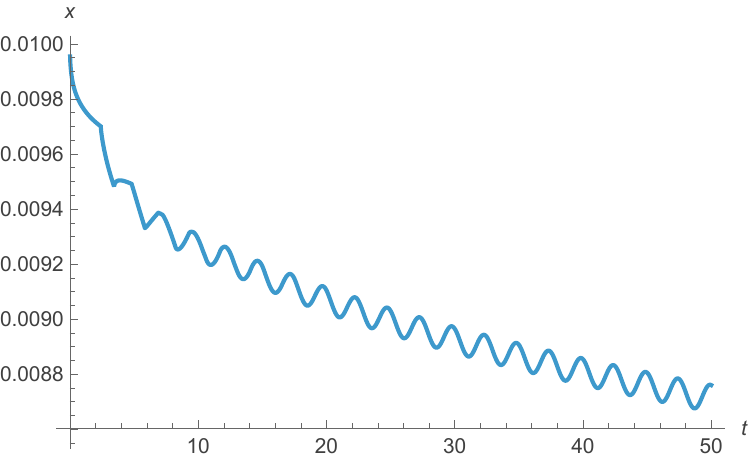}
			\caption{$\tau_1 = 2.4$}
		\end{subfigure}
		\caption{Stable behavior for $\tau_2 = 1.04$}
		\label{3_6_ex1}
	\end{figure}
	
\item Let $\tau_2 = 1.1$, so that $\tau_{2b*} < \tau_2 < \tau_{2a*}$. In this regime, the equilibrium exhibits an S-U-S type stability switching (see Fig.~\ref{3_6_ex2}). For $\tau_2 = 1.1$, the boundary values are $\tau_{1a} = 2.11$ and $\tau_{1b} = 3.03$.

Choosing $\tau_1 = 1.8 < \tau_{1a}$, $\tau_{1a} < \tau_1 = 2.9 < \tau_{1b}$, and $\tau_1 = 3.5 > \tau_{1b}$, we observe that the system is stable for $\tau_1 = 1.8$ and $\tau_1 = 3.5$. However, for $\tau_1 = 2.9$, a characteristic root $\lambda = 0.00200287 + 2.10566i$ with positive real part appears, indicating instability.

\begin{figure}[H]
	\centering
	\begin{subfigure}{0.45\textwidth}
		\centering
		\includegraphics[width=\linewidth]{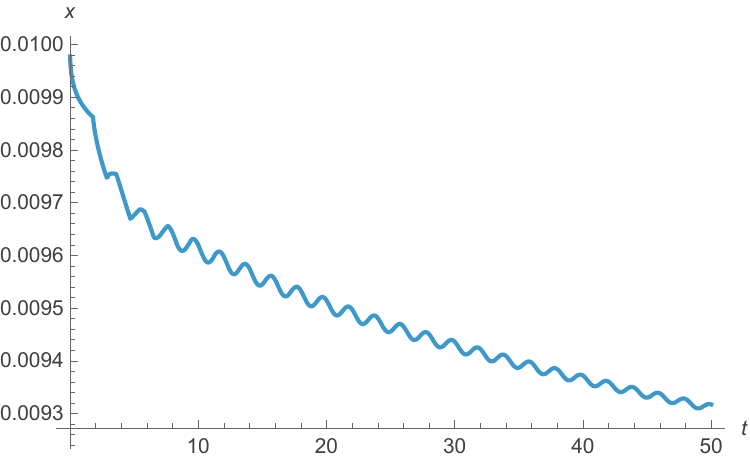}
		\caption{$\tau_1 = 1.8$ (stable)}
	\end{subfigure}
	\hfill
	\begin{subfigure}{0.45\textwidth}
		\centering
		\includegraphics[width=\linewidth]{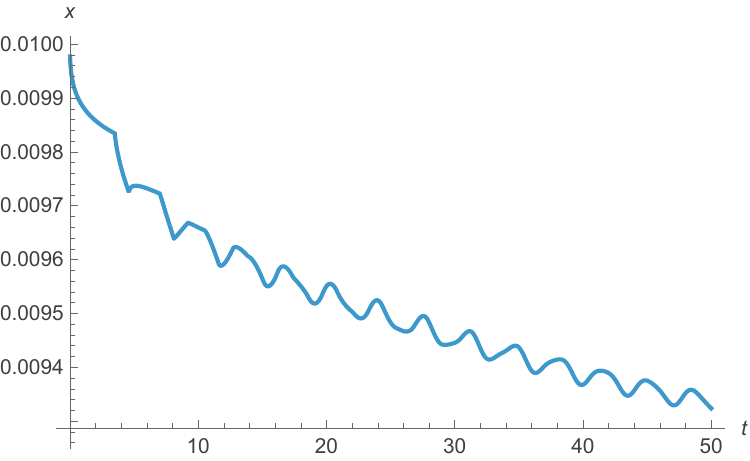}
		\caption{$\tau_1 = 3.5$ (stable)}
	\end{subfigure}
	\caption{Stable cases corresponding to the S-U-S stability switching for $\tau_2 = 1.1$; instability occurs for intermediate values $\tau_{1a} < \tau_1 < \tau_{1b}$.}
	\label{3_6_ex2}
\end{figure}
		\item Let $\tau_2 = 1.21 > \tau_{2a*}$. In this case, the equilibrium is unstable for all $\tau_1$. For instance, choosing $\tau_1 = 0.3$, $\tau_1 = 1.8$, and $\tau_1 = 3.2$ confirms the instability (see Fig.~\ref{3_6_ex3}).
			\begin{figure}[H]
				\centering
				\begin{subfigure}{0.23\textwidth}
					\centering
					\includegraphics[height=4.5cm, clip]{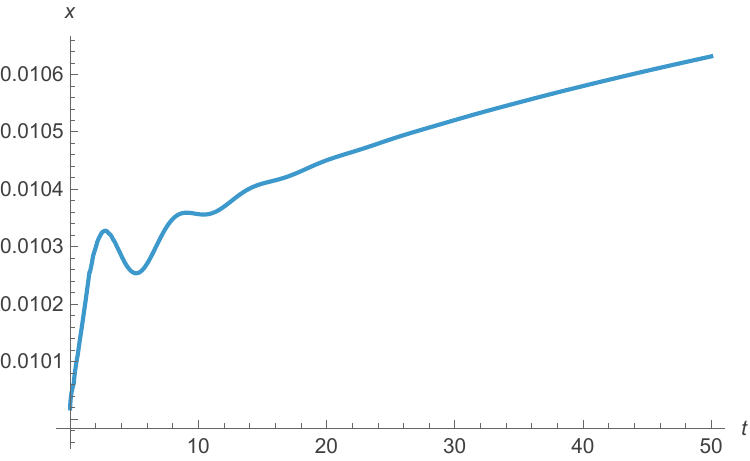}
					\caption{$\tau_1 = 0.3$}
				\end{subfigure}
			\hspace{0.1\textwidth}
				\begin{subfigure}{0.23\textwidth}
					\centering
					\includegraphics[height=4.5cm, clip]{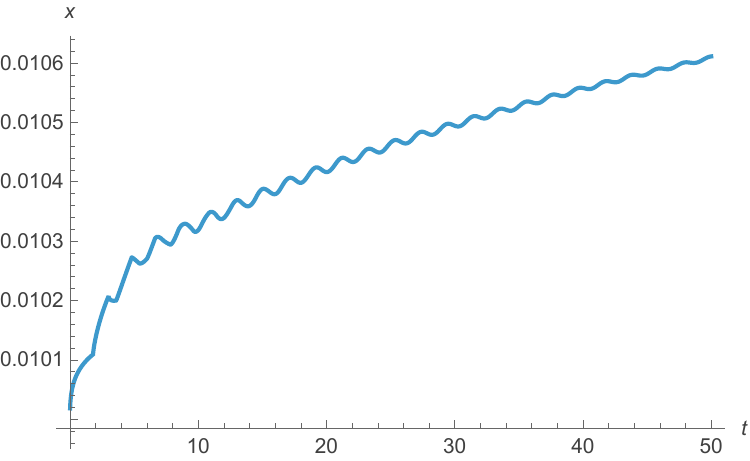}
					\caption{$\tau_1 = 1.8$}
				\end{subfigure}
			\hspace{0.1\textwidth}
				\begin{subfigure}{0.23\textwidth}
					\centering
					\includegraphics[height=4.5cm, clip]{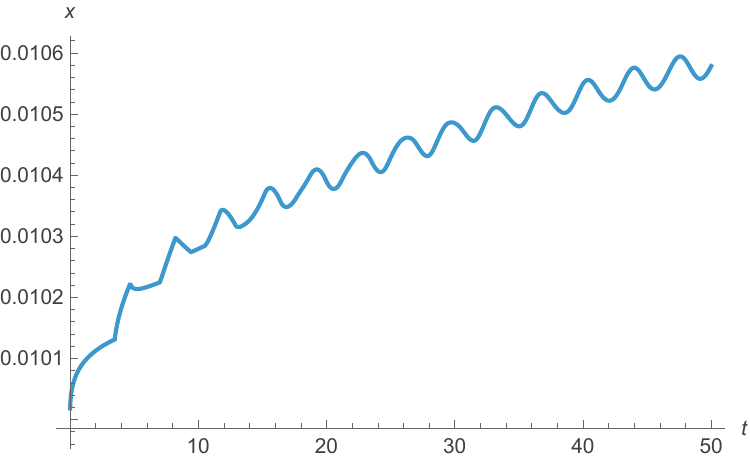}
					\caption{$\tau_1 = 3.5$}
				\end{subfigure}
				\caption{Stability switching behavior for $\tau_2 = 1.1$}
				\label{3_6_ex3}
			\end{figure}
	
\end{enumerate}
	\textbf{Note:} In Example 1, two branches arise, namely $\lambda = iv$ (first branch) and $\lambda = 0$ (second branch). We seek a condition under which the second branch becomes irrelevant. Observe that if the critical value $\tau_{2*}$ corresponding to $\lambda = 0$ is negative, then this branch does not influence the stability. Therefore, we determine the condition under which the expression for $\tau_{2*}$ given in (\ref{tau2}) is negative. For this purpose, we consider the following two cases:\\
\textbf{Case 1: $\gamma>0$}\\
For 
\[
\frac{-1}{\gamma}\log\!\left(\frac{k - \gamma}{k}\right) < 0,
\]
we must have
\begin{align}
	\log\!\left(\frac{k - \gamma}{k}\right) &> 0,\nonumber\\[0.5em]
	\text{i.e.,} \quad 1 - \frac{\gamma}{k} &> 1, \nonumber\\[0.5em]
	\implies -\frac{\gamma}{k} &> 0, \nonumber\\[0.5em]
	\implies k &< 0. \nonumber
\end{align}
\textbf{Case 2: $\gamma<0$}\\

For 
\[
\frac{-1}{\gamma}\log\!\left(\frac{k - \gamma}{k}\right) < 0,
\]
we must have
\begin{align}
	\log\!\left(\frac{k - \gamma}{k}\right) &< 0, \nonumber\\[0.5em]
	\text{i.e.,} \quad 0 < 1 - \frac{\gamma}{k} &< 1. \nonumber
\end{align}

This yields
\begin{align}
	1 - \frac{\gamma}{k} &< 1 \;\;\implies\;\; \frac{\gamma}{k} > 0, \nonumber\\[0.5em]
	0 < 1 - \frac{\gamma}{k} &\;\;\implies\;\; \frac{\gamma}{k} < 1. \nonumber
\end{align}

Hence,
\begin{align}
	0 < \frac{\gamma}{k} < 1. \nonumber
\end{align}

Now, we consider two subcases:

\begin{itemize}
	\item If $k > 0$, then $0 < \frac{\gamma}{k} < 1$ implies $0 < \gamma < k$, which contradicts the assumption $\gamma < 0$.
	
	\item Therefore, we must have $k < 0$. In this case, the inequality $0 < \frac{\gamma}{k} < 1$ implies
	\[
	k < \gamma < 0.
	\]
\end{itemize}
	
	Both of these cases imply that in both the second and the third quadrant, the critical value obtained by putting $\lambda=0$ in the characteristic equation (\ref{eq:char}) will not affect the stability.
	
	So, now we consider the same example with negative $k$, i.e, $(k,\gamma)$ lies in the second quadrant.
	\textbf{Example 6.2}
	
	Let us consider the parameters $\alpha = 0.4$, $k =- 1.02$, and $\gamma = 0.3$. 
	The resulting plot is shown in Fig.~\ref{eg5_3}.
	
	\begin{figure}[h!]
		\centering
		\includegraphics[width=0.5\linewidth]{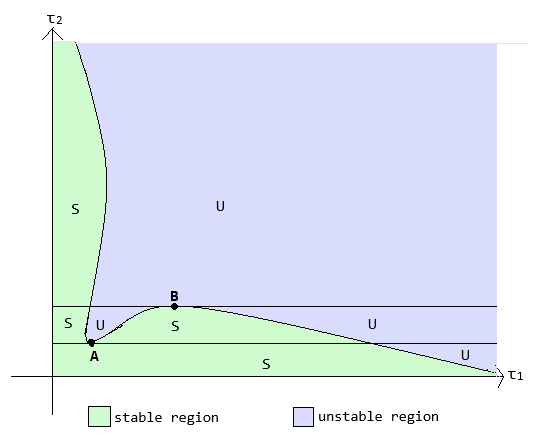}
		\caption{Stability diagram in the $\tau_1$–$\tau_2$ plane for $\alpha=0.4$, $k=-1.02$, and $\gamma=0.3$(Not upto scale).}
		\label{eg5_3}
	\end{figure}
	Here, $\lambda=0$ gives $\tau_{2*}=-0.85943$, which cannot be a critical value. So, the stability boundary will be given by $\lambda=iv.$ Hence, in this example, we can observe
	both the SSR region and the S-U-S-U region, i.e, say the local minima point $A$ and the local maxima point correspond to the critical values $\tau_{2a*}$ and $\tau_{2b*}$ so we have the behavior as shown in Fig. \ref{eg5_3}.
	
		\section{Conclusion}
	\label{sec:conc}

In this paper, we have extended our earlier work \cite{dutta2025some} on nonlinear fractional delay differential equations with two discrete delays by completing the stability analysis in the remaining regions of the $(k,\gamma)$-plane. In particular, for the case $\tau_1 = 0$, a full characterization of the stability behavior has been obtained, thereby filling the gaps left in the previous study.
For the general case involving two delays, we derived a new stability result that provides a sufficient condition for instability in certain parameter regions. 

In addition, we investigated the combined influence of the two delays by constructing stability diagrams in the $(\tau_1,\tau_2)$-plane for selected parameter values. 
Overall, the results presented in this work contribute to a more complete understanding of fractional delay systems with multiple delays and delay-dependent coefficients. Future work may consider more general nonlinearities, distributed delays, and higher-dimensional systems.

	\bibliographystyle{unsrt}	
\bibliography{reff}
	
	\end{document}